\documentclass[10pt]{amsart}

\usepackage[a4paper,margin=2.5cm]{geometry}
\usepackage{amsmath,amssymb,amsthm,mathtools,mathrsfs}
\usepackage[T1]{fontenc}
\usepackage[utf8]{inputenc}
\usepackage{lmodern}
\usepackage{microtype}
\usepackage{hyperref}
\usepackage{enumitem}

\hypersetup{
  colorlinks=false,
  linkbordercolor={1 0 0},
  citebordercolor={0 1 0},
  urlbordercolor={0 1 1}
}

\newtheoremstyle{spacedplain}%
  {3pt plus 1pt minus 1pt}%
  {3pt plus 1pt minus 1pt}%
  {\itshape}%
  {}%
  {\bfseries}%
  {.}%
  {.5em}%
  {}
  
\theoremstyle{spacedplain}
\newtheorem{theorem}{Theorem}[section]
\newtheorem{proposition}[theorem]{Proposition}
\newtheorem{lemma}[theorem]{Lemma}
\newtheorem{corollary}[theorem]{Corollary}

\newcommand{\dd}{\,\mathrm{d}}
\newcommand{\E}{\mathbb E}
\newcommand{\Pp}{\mathbb P}

\allowdisplaybreaks[2]

\AtBeginDocument{%
  \setlength{\abovedisplayskip}{6pt plus 2pt minus 2pt}%
  \setlength{\belowdisplayskip}{6pt plus 2pt minus 2pt}%
  \setlength{\abovedisplayshortskip}{3pt plus 2pt minus 1pt}%
  \setlength{\belowdisplayshortskip}{3pt plus 2pt minus 1pt}%
}

\title{Asymptotic Formulae For Reciprocal Partitions}
\author{Nilotpal Kanti Sinha}
\address{Dubai, United Arab Emirates}
\email{nilotpal.sinha@gmail.com}
\date{}
\keywords{reciprocal partitions, restricted partitions, Sylvester denumerant, prime number theorem, asymptotic enumeration}
\subjclass[2020]{11P82, 11N05, 05A17, 11D45}

\begin{document}

\begin{abstract}
For integers $1\le m\le n$, let $s(n,m)$ denote the number of $m$-tuples
$(k_1,\ldots,k_m)$ of nonnegative integers satisfying
\[
n=\sum_{j=1}^{m}\frac{k_j}{j}.
\]
We obtain a complete asymptotic expansion for $\log s(n,m)$, uniformly for all
$n\ge m$ as $m\to\infty$. The coefficients are given explicitly in terms of
limiting prime-block functions arising from the residue structure of the
problem. We also determine the full hierarchy of multiplicative corrections in
the sparse regime, identifying explicit constants at every fixed order and, in
particular, the first correction constants $1/4$ and $(1-\log 2)/4$. The
results give uniform two-parameter asymptotics as both the target and the
number of allowed reciprocal parts grow.
\end{abstract}
\maketitle

\section{Introduction and main results}\label{sec:intro}

A natural partition problem arises by decomposing a quantity into pieces whose
allowed sizes are reciprocals of integers.  For a quantity of size $n$ and a
positive integer $m$, allow pieces of sizes
\[
1,\frac12,\frac13,\ldots,\frac1m.
\]
If $k_j$ denotes the number of pieces of size $1/j$, an exact decomposition
satisfies
\begin{equation}\label{eq:def-s}
n=\frac{k_1}{1}+\frac{k_2}{2}+\cdots+\frac{k_m}{m},
\qquad k_1,\ldots,k_m\in\mathbb Z_{\ge0}.
\end{equation}
We write $s(n,m)$ for the number of such decompositions.  Thus $s(n,m)$ counts
partitions of a quantity of size $n$ into the reciprocal parts
$1,1/2,\ldots,1/m$, where only the number of pieces of each size matters.

The diagonal case $m=n$ was posed by Sinha on MathOverflow in
2012 \cite{SinhaMO12}. McKay and Alekseyev provided numerical
computations, with the resulting diagonal sequence recorded as OEIS A208480
\cite{OEIS208480}.  In the same discussion, Meyerowitz introduced the
two-parameter version corresponding to $s(n,m)$ and developed the
quotient--residue decomposition $k_j=jq_j+r_j$ used below.

After multiplication by a common denominator, this problem becomes a
Sylvester denumerant.  Classical denumerant theory starts from a fixed list of
integer coefficients and studies the resulting quasipolynomial counting
function; see Sylvester \cite{Sylvester1857} and Beck and Robins
\cite{BeckRobins15}.  Explicit formulas for polynomial parts and Sylvester
waves were developed further by Dilcher and Vignat \cite{DilcherVignat17},
O'Sullivan \cite{OSullivan16,OSullivan18,OSullivan20}, Kiran \cite{Kiran22},
and Xin and Zhang \cite{XinZhang25}.  There is also a different asymptotic
literature in which restrictions on ordinary integer partitions grow; for
example, Jiang and Wang \cite{JiangWang19} allow the largest part and the
number of parts to vary together.  The phrase ``reciprocal sums of parts'' is
also used for a different statistic: Kim and Kim \cite{KimKim25} study the
sum of the reciprocals of the parts of an ordinary integer partition.  Here,
by contrast, the allowed part sizes themselves are $1,1/2,\ldots,1/m$.

Our setting differs in three ways.  After multiplication by
$L_m=\operatorname{lcm}(1,\ldots,m)$ the coefficient vector is
\[
 L_m,\frac{L_m}{2},\ldots,\frac{L_m}{m}.
\]
Thus (i) the number of coefficients grows, (ii) the coefficients themselves
change with $m$, and (iii) their common period $L_m$ grows exponentially on
the logarithmic scale.  Fixed-coefficient denumerant formulas therefore do
not give a uniform estimate as $m\to\infty$, while the growing-restriction
results for ordinary partitions concern a different generating function and
do not resolve this changing congruence modulus.  The first theorem below is
uniform for every $n\ge m$ and identifies the arithmetic correction to every
fixed order in $1/\log m$.  A second theorem goes beyond logarithmic accuracy
in the sparse range: for every fixed order $R$ it gives explicit
multiplicative corrections uniformly down to the scale
$n\asymp m^2/(\log m)^R$.  To the best of our knowledge, such two-parameter
asymptotics for this reciprocal coefficient vector have not appeared previously.

Put
\[
 L_m=\operatorname{lcm}(1,2,\ldots,m),\qquad
 \psi(m)=\log L_m=\sum_{p^a\le m}\log p.
\]
For fixed $m$, multiplying \eqref{eq:def-s} by $L_m$ gives a classical denumerant.
Here, however, the dimension, coefficient vector, and period all grow.  The useful
structure is instead the prime-power factorization of $L_m$: primes $p>\sqrt m$ act on
disjoint coordinate blocks, whose small biases generate the inverse-logarithmic
corrections.

Write $\alpha=m/n\in(0,1]$.  The main theorem separates the arithmetic scale $m$ from
the moving saddle $\alpha$.

\begin{theorem}[Uniform two-parameter expansion]\label{thm:main}
For every fixed integer $R\ge1$, there are explicitly computable functions
$E_1,\ldots,E_R\in C^\infty([0,1])$ such that, uniformly for integers $n\ge m$ as
$m\to\infty$,
\begin{equation}\label{eq:main-psi}
 \log s(n,m)
 =m\log n-\psi(m)+\log\frac mn
 +m\sum_{r=1}^{R}\frac{E_r(m/n)}{(\log m)^r}
 +o\!\left(\frac{m}{(\log m)^R}\right).
\end{equation}
Equivalently, for every fixed $R$,
\begin{equation}\label{eq:main-simple}
 \log s(n,m)
 =m\log n-m+\log\frac mn
 +m\sum_{r=1}^{R}\frac{E_r(m/n)}{(\log m)^r}
 +o\!\left(\frac{m}{(\log m)^R}\right).
\end{equation}
\end{theorem}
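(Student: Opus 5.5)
The plan is to reduce $s(n,m)$ to a continuous main term times an arithmetic density factor, and to extract the $1/\log m$ expansion from that factor by the prime number theorem. First I use Meyerowitz's decomposition $k_j=jq_j+r_j$ with $0\le r_j<j$. Then \eqref{eq:def-s} becomes $\sum_j q_j=n-t$, where $t=\sum_j r_j/j$ must be an integer. This gives the exact identity
\[
 s(n,m)=\sum_{t\ge0}A_m(t)\binom{n-t+m-1}{m-1},\qquad A_m(t)=\#\Bigl\{(r_j)_{j\le m}:\ 0\le r_j<j,\ \sum_j \tfrac{r_j}{j}=t\Bigr\}.
\]
If one ignores integrality of $t$, the weighted sum over all $m!$ residue vectors is a lattice count for the simplex $\{x\in\mathbb R_{\ge0}^m:\sum x_j/j\le n\}$. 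That simplex has volume $n^m\prod_j j/m!=n^m$, and differentiating in $n$ gives $m n^{m-1}$. So the expected leading term is $m n^{m-1}/L_m$, which accounts for $m\log n-\psi(m)+\log(m/n)$. Form \eqref{eq:main-simple} then follows from \eqref{eq:main-psi} via $\psi(m)=m+O(m(\log m)^{-A})$ for every $A$.

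To capture integrality, I write $\mathbf 1[t\in\mathbb Z]=L_m^{-1}\sum_{h\bmod L_m}e(ht)$, which is where $L_m$ enters. By the Chinese remainder theorem I factor $h$ over the prime powers $p^a\| L_m$. For a prime $p>\sqrt m$, the frequency $h$ modulo $p$ only sees the coordinates $j\in\{p,2p,\dots,\lfloor m/p\rfloor p\}$. These blocks are disjoint for distinct large primes, so their contributions multiply. I then pass to a saddle-point (exponentially tilted) probability measure on $(k_j)$: put $k_j$ geometric with parameter $e^{-\beta/j}$, with $\beta=\beta(\alpha)$ chosen so that $\E\sum k_j/j=n$. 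In this measure the residues $k_j\bmod j$ are nearly, but not exactly, uniform, with bias depending on $\alpha=m/n$. The density of integrality then becomes a product over large primes of local factors $1+\delta_p$. Each $\delta_p$ depends only on the block length $\lfloor m/p\rfloor$ and on $\alpha$, so $\delta_p=\Phi(m/p,\alpha)$ for a limiting prime-block function $\Phi$. The small primes $p\le\sqrt m$ (and higher prime powers) contribute a factor $e^{O(\sqrt m)}$, which is negligible against $m/(\log m)^R$ for every fixed $R$.

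Next I sum $\log(1+\Phi(m/p,\alpha))$ over $\sqrt m<p\le m$ by partial summation against the prime number theorem with de la Vallée Poussin error term. This gives $\int_{\sqrt m}^m \log(1+\Phi(m/u,\alpha))\,\frac{du}{\log u}+O(m e^{-c\sqrt{\log m}})$. Substituting $u=m/v$ and expanding $1/\log(m/v)=\sum_{r\ge1}(\log v)^{r-1}/(\log m)^r$ produces $m\sum_r E_r(\alpha)/(\log m)^r$. Here $E_r(\alpha)=\int_1^\infty \log(1+\Phi(v,\alpha))(\log v)^{r-1}v^{-2}\,dv$, up to the corrections coming from the Gaussian and volume terms of the saddle point. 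Smoothness of $E_r$ on $[0,1]$, including the endpoint $\alpha\to0$, follows from smooth dependence of $\Phi$ on $\beta(\alpha)$ and its decay in $v$.

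I expect the main obstacle to be the uniformity of the local limit step in the full range $n\ge m$. The saddle-point approximation must be good to relative error $\exp(o(m/(\log m)^R))$, whereas the distribution of $\sum k_j/j$ lives on the lattice $L_m^{-1}\mathbb Z$ of exponentially fine mesh. Concretely, I need to show that the non-principal frequencies $h$ contribute exactly the product of the local prime factors, up to a negligible error. For this I would bound the characteristic function $\E\,e(h\sum k_j/j)$ coordinatewise. Each coordinate $j$ with $h/j\notin\mathbb Z$ contributes a factor bounded away from $1$ uniformly in $\beta$, since $k_j\bmod j$ has bounded bias. Hence any frequency not supported on a few large-prime blocks loses a factor $e^{-cm/\log m}$ or better. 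The delicate cases are $\alpha$ near $1$, where the bias is strongest, and very sparse $n\gg m^2$, where $\beta\to0$ and I must check that $\Phi$ and the error terms stay uniform.
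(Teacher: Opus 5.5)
Your proposal has two genuine gaps, located where you expect the difficulty. The first concerns the small primes and the lower bound. Your claim that every frequency not supported on a few large-prime blocks is damped by $e^{-cm/\log m}$ is false. If $p\le\sqrt m$ and $h=cL_m/p$ with $p\nmid c$, then $j\nmid h$ only for multiples of $Q_p=p^{a_p}$, and there are $\lfloor m/Q_p\rfloor<p$ of these. So your coordinatewise ``bounded away from $1$'' estimate gives at best $\rho^{\lfloor m/Q_p\rfloor}$, which for $p=2$ is just $\rho$. Even if the claim held, a bound of size $e^{-cm/\log m}$ summed over $L_m=e^{(1+o(1))m}$ frequencies is useless. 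Moreover, bounding nonprincipal terms gives a lower bound on the integrality probability only if they are provably smaller than the principal term, and you never show this. You also need, and do not supply, the quantitative limit $F_{p,M}\to F_M$ and exponential decay of $\log F_M$ in $M$. Your $\delta_p$ depends on $p$, not only on $\lfloor m/p\rfloor$, and these estimates are what allow truncating the block sizes before applying the prime number theorem (Proposition~\ref{prop:block}).

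The paper avoids Fourier analysis at small primes entirely. For each $p\le\sqrt m$ the single residue $R_{Q_p}$ is uniquely determined by the congruence modulo $Q_p$, since its coefficient $L_m/Q_p$ is a unit there, and $Q\mu_{Q,z}(r)\in[c_{z_1},C_{z_1}]$. Hence the small-prime congruences cost $\prod_{p\le\sqrt m}Q_p^{-1}e^{O(h_m)}$ in both directions, with a two-sided Radon--Nikodym bound for transferring events (Lemma~\ref{lem:controls}). Likewise, the local limit theorem on the lattice $L_m^{-1}\mathbb Z$ is unnecessary at accuracy $e^{o(m/(\log m)^R)}$. Since $f_m(q+1)\ge f_m(q)$ (add one to $k_1$), a Chebyshev lower bound for $\Pp_z(n-W\le T\le n)$ with $W=o(m/(\log m)^R)$ suffices (Lemma~\ref{lem:coefficient-window}), and the upper bound is simply $s(n,m)e^{-zn}\le G_m(z)$.

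The second gap concerns $E_r$ for $r\ge2$. You choose $\beta$ so that $\E\sum_jk_j/j=n$ under the \emph{unconditioned} product measure. Conditioning on integrality then shifts the mean of the total by about $mA_1'(\alpha)/\log m$, while its standard deviation is only $\asymp\sqrt m/\alpha$. For $\alpha\asymp1$, hitting $n$ is therefore a large-deviation event, which no Gaussian local limit captures to the required accuracy. Your integral is exactly the paper's $A_r(\alpha)$, since $a_{r,M}=\int_M^{M+1}(\log v)^{r-1}v^{-2}\,dv$, and this is all of $E_r$ only when $r=1$. A Gaussian correction recovers $E_2=A_2-\frac{\alpha^2}{2}A_1'^2$. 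However, the third cumulant (of size $\asymp m/\alpha^3$) and the $mA_1''/\log m$ correction to the variance already enter at order $m/(\log m)^3$. They produce the terms $\frac{\alpha^3}{3}A_1'^3$ and $\frac{\alpha^4}{2}A_1'^2A_1''$ in \eqref{eq:E3}, and all higher cumulants enter at higher orders.

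The missing idea is to re-tilt at the saddle of the \emph{conditioned} generating function $G_m$. Take $z=\alpha W$, solve $1-1/W+\alpha\,\partial_z\mathcal A(\alpha W,t)=0$ order by order in $t=1/\log m$, and read off $E_r$ from $W-\log W+\mathcal A(\alpha W,t)$ (Lemma~\ref{lem:finite-saddle}). The target is then typical and no large-deviation correction remains.

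Finally, the range $n\gg m^2$ that you flag as delicate needs no saddle. In your own identity, $\binom{n-t+m-1}{m-1}=\frac{n^{m-1}}{(m-1)!}e^{O(m^2/n)}$ uniformly for $0\le t<m$. Also $\sum_tA_m(t)=m!/L_m$, because $r\mapsto\sum_jr_jL_m/j$ is a surjective homomorphism onto $\mathbb Z/L_m\mathbb Z$ (Proposition~\ref{prop:sparse}). So the saddle analysis is needed only for $m\le n\le m^2$.
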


Here and throughout, ``complete asymptotic expansion'' is meant in the
Poincar\'e sense: for each fixed truncation order $R$, the error is smaller
than the last retained scale; no convergence of the resulting infinite formal
series is asserted.  The uniformity in Theorem~\ref{thm:main} means that if
$\mathcal R_R(n,m)$ denotes the difference between the left side of
\eqref{eq:main-psi} and the displayed terms on its right, then
\[
 \sup_{n\ge m}\frac{|\mathcal R_R(n,m)|}{m/(\log m)^R}\longrightarrow0
 \qquad(m\to\infty).
\]

The second form follows from the classical zero-free-region estimate
$\psi(m)=m+o(m/(\log m)^R)$ for every fixed $R$ \cite{MontgomeryVaughan07}.
At the diagonal point, the first three coefficients are
\begin{equation}\label{eq:diag-values}
 \begin{aligned}
 E_1(1)&=0.225872292\ldots,\\
 E_2(1)&=0.046894482\ldots,\\
 E_3(1)&=0.017888502\ldots.
 \end{aligned}
\end{equation}
Hence the original diagonal problem is an immediate specialization.

\begin{corollary}[Diagonal case]\label{cor:diagonal}
For every fixed $R\ge1$,
\[
 \log s(n,n)=n\log n-n
 +n\sum_{r=1}^{R}\frac{E_r(1)}{(\log n)^r}
 +o\!\left(\frac{n}{(\log n)^R}\right).
\]
\end{corollary}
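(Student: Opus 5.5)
The corollary is the special case $n=m$ of Theorem~\ref{thm:main}, so the plan is to specialize the uniform expansion \eqref{eq:main-simple} at the diagonal. I will use the second form \eqref{eq:main-simple} rather than \eqref{eq:main-psi}, since it already has $\psi(m)$ replaced by $m$ with an error that the theorem absorbs.

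\textbf{Step 1: specialize the theorem.} Theorem~\ref{thm:main} holds uniformly over all pairs with $n\ge m$ as $m\to\infty$, so it applies in particular to the pairs $(n,n)$ as $n\to\infty$. Put $m=n$, so that $\alpha=m/n=1$.

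\textbf{Step 2: simplify each term.} With $m=n$:
\begin{itemize}
\item[(a)] the term $\log(m/n)$ equals $\log 1=0$;
\item[(b)] the main terms $m\log n-m$ become $n\log n-n$;
\item[(c)] each coefficient $E_r(m/n)$ becomes the constant $E_r(1)$.
\end{itemize}
These constants are well defined because $E_r\in C^\infty([0,1])$ and $1$ lies in the domain.

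\textbf{Step 3: the error term.} The error $o\!\left(m/(\log m)^R\right)$ becomes $o\!\left(n/(\log n)^R\right)$. This is justified by the uniformity statement
\[
\sup_{n\ge m}\frac{|\mathcal R_R(n,m)|}{m/(\log m)^R}\longrightarrow 0 \qquad (m\to\infty).
\]
Restricting the supremum to the single point $n=m$ gives $|\mathcal R_R(n,n)|\big/\bigl(n/(\log n)^R\bigr)\to0$. Substituting Steps 2 and 3 into \eqref{eq:main-simple} yields exactly the formula in Corollary~\ref{cor:diagonal}.

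The argument has no genuine obstacle. The only point needing care is whether the error term in \eqref{eq:main-simple} is still uniform after $\psi(m)$ is replaced by $m$. It is: the replacement error $\psi(m)-m=o\!\left(m/(\log m)^R\right)$ depends only on $m$, not on $n$, so it is uniform in $n$ and is absorbed into the stated error.
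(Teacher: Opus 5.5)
Your proposal is correct and matches the paper, which presents the corollary as an immediate specialization of Theorem~\ref{thm:main} at $n=m$: there $\alpha=1$, $\log(m/n)=0$, and $E_r(m/n)=E_r(1)$. Your remark on absorbing $\psi(m)-m=o\!\left(m/(\log m)^R\right)$ is fine. Strictly, uniformity in $n$ is not needed, since the diagonal is a single sequence $n=m\to\infty$ inside the range $n\ge m$.
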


There is finer multiplicative structure when $m/n\to0$.  For $r\ge1$ put
\begin{equation}\label{eq:kappa-r}
 \kappa_r=\frac{(r-1)!}{2}
 \left(1-\frac12\sum_{j=0}^{r-1}\frac{(\log 2)^j}{j!}\right)
 =\frac12\int_0^{\log 2}t^{r-1}e^{-t}\,\dd t.
\end{equation}
The leading term $(m/L_m)n^{m-1}$ then admits an explicit hierarchy of sparse
corrections.

\begin{theorem}[Sparse asymptotics]\label{thm:transition}
Fix an integer $R\ge1$ and $C>0$.  Uniformly for integers
\[
 n\ge \frac{m^2}{C(\log m)^R},
\]
as $m\to\infty$,
\begin{equation}\label{eq:sparse-all-orders}
 \log\frac{s(n,m)}{(m/L_m)n^{m-1}}
 =\frac{m^2}{n}\sum_{r=1}^{R}\frac{\kappa_r}{(\log m)^r}+o(1).
\end{equation}
In particular $\kappa_1=1/4$ and $\kappa_2=(1-\log 2)/4$.  Hence if
$n\log m/m^2\to c\in(0,\infty]$, then
\begin{equation}\label{eq:transition}
 \frac{s(n,m)}{(m/L_m)n^{m-1}}
 \longrightarrow \exp\!\left(\frac1{4c}\right),
\end{equation}
where $1/\infty=0$.  If $n\log m/m^2\to\infty$, this reduces to
$s(n,m)\sim(m/L_m)n^{m-1}$.  At the next sparse scale, for every fixed $c>0$,
\begin{equation}\label{eq:second-sparse-scale}
 n=\left\lfloor c\frac{m^2}{(\log m)^2}\right\rfloor
 \quad\Longrightarrow\quad
 s(n,m)\sim
 e^{(1-\log 2)/(4c)}m^{1/(4c)}\frac{m}{L_m}\,n^{m-1}.
\end{equation}
More generally, for fixed $R$ and $c>0$, if
$n=\lfloor c m^2/(\log m)^R\rfloor$, then
\begin{equation}\label{eq:R-sparse-scale}
 s(n,m)\sim \frac{m}{L_m}n^{m-1}
 \exp\!\left\{\frac1c\sum_{r=1}^{R}
 \kappa_r(\log m)^{R-r}\right\}.
\end{equation}
\end{theorem}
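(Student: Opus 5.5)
The plan is to prove Theorem~\ref{thm:transition} through Meyerowitz's quotient--residue decomposition. Writing $k_j=jq_j+r_j$ with $0\le r_j<j$, condition \eqref{eq:def-s} becomes $\sum_j q_j+T(r)=n$, where $T(r)=\sum_{j\le m} r_j/j$. This forces $T(r)\in\mathbb Z$. For a fixed admissible residue vector $r$ the $q$'s are counted by a binomial coefficient, so
\[
 s(n,m)=\sum_{\substack{r\in\prod_j[0,j)\\ T(r)\in\mathbb Z}}\binom{n-T(r)+m-1}{m-1}.
\]
The first step is to count admissible residue vectors. I will show that the proportion of the $m!$ vectors with $T(r)\in\mathbb Z$ is $1/L_m$, possibly up to a negligible factor. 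The reason is that, via the CRT on the fractional parts $r_j/j$, the condition $T\in\mathbb Z$ splits into one congruence condition for each prime power dividing $L_m$. Each condition is equidistributed modulo $p^a$, because the coordinate $j=p^a$ contributes a uniform residue. Since $\frac{m!}{L_m}\cdot\frac{n^{m-1}}{(m-1)!}=\frac{m}{L_m}n^{m-1}$, it then remains to show that
\[
 \E\Bigl[\textstyle\binom{n-T+m-1}{m-1}\Big/\frac{n^{m-1}}{(m-1)!}\Bigr]
 =\exp\Bigl\{\frac{m^2}{n}\sum_{r=1}^R\frac{\kappa_r}{(\log m)^r}+o(1)\Bigr\},
\]
where $\E$ is taken over uniform $r$ conditioned on $T\in\mathbb Z$.

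Next I expand the binomial. Since $0\le T\le m$ and $n\ge m^2/(C(\log m)^R)$, the product $\prod_{i<m}(1+(i-T)/n)$ equals
\[
 \exp\{(m^2/2-mT)/n+O(m^3/n^2)\},
\]
and $m^3/n^2\ll(\log m)^{2R}/m\to0$. Write $T=\E T+(T-\E T)$. Under the conditioning, the coordinates are nearly independent with total variance $\asymp m$. A conditional Hoeffding or moment-generating-function bound, obtained by applying the same prime-block Fourier expansion to $\E e^{\lambda T}\mathbf 1_{T\in\mathbb Z}$, gives $\E e^{-m(T-\E T)/n}=1+O(m^3/n^2)=1+o(1)$. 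This reduces the theorem to the conditional mean identity
\[
 \frac m2-\E[T\mid T\in\mathbb Z]=m\sum_{r=1}^R\frac{\kappa_r}{(\log m)^r}+O\Bigl(\frac m{(\log m)^{R+1}}\Bigr),
\]
since multiplying the error by $m/n\ll(\log m)^R/m$ gives $o(1)$.

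The main obstacle is the conditional mean. I will organize it by prime blocks. A prime $p\in(m/2,m]$ divides only $j=p$, so the $p$-condition forces $r_p=0$. This removes a mean of $(p-1)/(2p)=\tfrac12+O(1/p)$. For primes $p\in(\sqrt m,m/2]$, the block $\{pa: a\le m/p\}$ imposes a single condition modulo $p$. I expect that a symmetry $r_j\mapsto j-1-r_j$-type argument, applied within the block, shows that this condition shifts the block mean only by $O(1/p)$; the cleanest case to check first is $a=1,2$. Primes $p\le\sqrt m$ number $O(\sqrt m/\log m)$, and I expect their total effect to be $O(\sqrt m\log m)$. If both expectations hold, the bias is
\[
 \frac12\sum_{m/2<p\le m}1+O\Bigl(\sum_{p\le m}\frac1p\cdot\#\{\text{blocks}\}\Bigr)+O(\sqrt m\log m).
\]
I then evaluate the first sum by the prime number theorem with de la Vall\'ee Poussin error, as in the second form of Theorem~\ref{thm:main}. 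Substituting $p=me^{-t}$, so that $\dd p/\log p=me^{-t}\dd t/(\log m-t)$, and expanding $1/(\log m-t)=\sum_r t^{r-1}/(\log m)^r$, the sum becomes $m\sum_r\kappa_r/(\log m)^r$ with $\kappa_r=\frac12\int_0^{\log2}t^{r-1}e^{-t}\dd t$, which is exactly \eqref{eq:kappa-r}. If the middle-range blocks turn out to carry a nonzero bias, I will compute it exactly from the block distribution and fold it into the same integral. The values $\kappa_1=1/4$ and $\kappa_2=(1-\log2)/4$ follow directly from \eqref{eq:kappa-r}.

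The consequences \eqref{eq:transition}, \eqref{eq:second-sparse-scale} and \eqref{eq:R-sparse-scale} then follow by substituting $m^2/n=(\log m)^R/c+O(1)$. When $R=1$ only $\kappa_1/c$ survives. When $R=2$, the term $\kappa_1\log m/c$ gives the factor $m^{1/(4c)}$.
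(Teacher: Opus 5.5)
Your architecture is the paper's: the exact mixture of Proposition~\ref{prop:binomial} with $\sum_h c_m(h)=m!/L_m$, linearization of the binomial product to $e^{z_0(m/2-H)}$ with error $O(m^3/n^2)$, and reduction to the first two cumulants of $H$ under the uniform law on admissible residues. In both, $\kappa_r$ comes only from the blocks $M_p=1$ through $\tfrac12(\pi(m)-\pi(m/2))$, and your integral computation of $\kappa_r$ is correct.

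The gap is in the step you call the main obstacle. The reflection $r_{ap}\mapsto ap-1-r_{ap}$ does not preserve $E_p$. It carries $\sum_{a\le M}\overline a\,r_{ap}\equiv0$ to $\sum_{a\le M}\overline a\,r_{ap}\equiv-\sum_{a\le M}\overline a\pmod p$, which is a different coset unless $p\mid\sum_{a\le M}\overline a$ (for $M=2$, $1+\overline2\equiv3\cdot\overline2\not\equiv0$). So it gives no information about the conditioned mean. The fallback does not help either: the stated $\kappa_r$ are correct only because this bias vanishes, since a per-block bias $c_M\ne0$ would contribute $c_M(\pi(m/M)-\pi(m/(M+1)))$ at the same order as $\kappa_1$.

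The correct fact is that the shift is exactly zero for $M_p\ge2$. Write $R_{ap}=s_a+pu_a$ with $0\le s_a<p$, $0\le u_a<a$. Under the uniform law these are independent and uniform, and $E_p$ depends only on $(s_a)_{a\le M}$. The uniform measure on the hyperplane $\sum_a\overline a\,s_a\equiv0$ in $(\mathbb Z/p\mathbb Z)^M$ has uniform one-dimensional marginals when $M\ge2$, so every $\E R_{ap}$ is unchanged. Equivalently $(\log F_{p,M})'(0)=0$, because each nonzero mode in Lemma~\ref{lem:Fourier} is a product of $M$ factors vanishing at $z=0$. The paper uses this in the form $F_M'(0)=0$ inside Lemma~\ref{lem:cumulant}.

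The small primes also need an argument, not just an expectation. Their congruences modulo $Q_p=p^{a_p}$ involve coordinates that also lie in large-prime blocks (e.g.\ $j=2p$). So the conditioned law is not a product over primes, mean shifts cannot simply be added, and your $O(\sqrt m\log m)$ is only asserted. The missing idea is the control-coordinate decoupling of Lemma~\ref{lem:controls}. Once all events $E_q$, $q>\sqrt m$, hold, the small-prime congruences are satisfied by a unique choice of the $h_m=\pi(\sqrt m)$ coordinates $R_{Q_p}$. Hence under the uniform law the noncontrol coordinates have exactly their large-prime-conditioned law, and the mean moves by less than $h_m$; this is the content of Lemma~\ref{lem:control-derivative}.

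Finally, Hoeffding is not available for a conditioned law. What you need is a bound on $\operatorname{Var}_z(H)$ under the tilted, fully conditioned law, uniform for $0\le z\le C(\log m)^R/m$. The paper gets $O(m)$ from block independence and $\partial_z^2\log F_{p,M}=O(M)$, then restores the controls via a Radon--Nikodym factor that is bounded because $h_mz=o(1)$. In this range even the crude bound $\operatorname{Var}(U_p\mid E_p)\le M_p^2$, giving $O(m^{3/2})$, would suffice, since $z^2m^{3/2}\to0$.

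With these repairs, your direct evaluation at $z=0$, namely $\tfrac m2-\E H=\tfrac12\sum_{m/2<p\le m}(1-\tfrac1p)+O(h_m+\log m)$, is a genuine simplification of the paper's route through Theorem~\ref{thm:prime-block}.
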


The sparse coefficients are the boundary derivatives of the coefficient functions in
Theorem~\ref{thm:main}: for every fixed $r\ge1$,
\[
 E_r'(0)=A_r'(0)=\kappa_r.
\]
Thus the sparse hierarchy is the small-$m/n$ boundary behavior of the same
two-parameter expansion.  Here, as in Theorem~\ref{thm:main}, the truncation
order $R$ is fixed; no convergence assertion as $R\to\infty$ is intended.

\paragraph{How the proof fits together.}
The proof is easier to follow if one first separates the different roles played by
$m$ and $n$.  The cutoff $m$ determines which reciprocal parts are available and hence
controls the arithmetic of the problem.  The target $n$ determines which coefficient
of the resulting generating function we must extract.

The first step is to write
\[
 k_j=jq_j+r_j,\qquad 0\le r_j<j.
\]
Then $k_j/j=q_j+r_j/j$.  The quotient variables $q_j$ form an ordinary composition
problem and are responsible for the main combinatorial growth, while all of the
number-theoretic difficulty is pushed into the requirement that
$\sum r_j/j$ be an integer.  After multiplication by
$L_m=\operatorname{lcm}(1,\ldots,m)$, this becomes a single congruence modulo $L_m$.
If the residues were perfectly equidistributed modulo $L_m$, the congruence would cost
a factor $L_m^{-1}$; after taking logarithms this explains the term
$-\psi(m)=-\log L_m$ in Theorem~\ref{thm:main}.

The residues are not exactly equidistributed, and the rest of the arithmetic analysis
measures this small bias.  We factor the congruence prime by prime.  For
$p>\sqrt m$, only the coordinates
\[
 p,2p,\ldots,\lfloor m/p\rfloor p
\]
are involved, and different such primes act on disjoint coordinate blocks.  The
normalized bias of one block is $F_{p,M}(z)$.  For fixed block size $M$ these factors
approach explicit limits $F_M(z)$, and the prime number theorem turns the accumulated
small biases into the functions $A_1(z),A_2(z),\ldots$.

The parameter $z$ is an exponential tilt used only to select the desired coefficient.
At first order the correct tilt is
\[
 z\sim \frac mn.
\]
Thus the arithmetic expansion produced by the functions $A_r(z)$ is evaluated at a
moving saddle determined by $m/n$.  Correcting that saddle order by order produces the
functions $E_r(m/n)$ in Theorem~\ref{thm:main}.  In this way the principal term
$m\log n$, the arithmetic penalty $-\psi(m)$, and the inverse-logarithmic corrections
all come from separate and identifiable parts of the argument.

For $m\le n\le m^2$ this moving-saddle method is uniform.  When $n>m^2$, the saddle
moves very close to zero and an exact identity coming from the same quotient--residue
decomposition is simpler:
\[
 s(n,m)=\sum_h c_m(h)\binom{n-h+m-1}{m-1}.
\]
This identity gives the far-sparse range directly, fixes the lower-order term
$\log(m/n)$ uniformly there, and yields the multiplicative sparse asymptotics
in Theorem~\ref{thm:transition}.  At every fixed order the relevant coefficient
comes from the smallest large-prime block, producing the explicit constants
$\kappa_r$ and extending the analysis to $n\asymp m^2/(\log m)^R$.

\section{Residues and exact decomposition}\label{sec:residue}

We begin with the elementary decomposition that separates the easy counting part from
the arithmetic part.  The quotient variables below will later be handled by ordinary
composition counting, while the residues contain exactly the obstruction to the sum
being an integer.

Write uniquely
\begin{equation}\label{eq:q-r}
 k_j=jq_j+r_j,\qquad q_j\ge0,\qquad 0\le r_j<j.
\end{equation}
Then
\[
 \sum_{j\le m}\frac{k_j}{j}=\sum_{j\le m}q_j+\sum_{j\le m}\frac{r_j}{j},
\]
so integrality depends only on the residue sum.  For $h\in\mathbb Z$, let
\[
 c_m(h)=\#\left\{(r_1,\ldots,r_m):0\le r_j<j,
 \ \sum_{j\le m}\frac{r_j}{j}=h\right\}.
\]
Necessarily $0\le h<m$.

\begin{proposition}[Exact binomial decomposition]\label{prop:binomial}
For every $n\ge0$,
\begin{equation}\label{eq:binomial-mixture}
 s(n,m)=\sum_{h=0}^{m-1}c_m(h)\binom{n-h+m-1}{m-1},
\end{equation}
with the usual convention when $n-h<0$.  Moreover,
\begin{equation}\label{eq:cm-total}
 \sum_hc_m(h)=\frac{m!}{L_m}.
\end{equation}
\end{proposition}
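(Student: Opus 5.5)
We prove the two claims separately. Both rest on the bijection \eqref{eq:q-r} between tuples $(k_1,\dots,k_m)\in\mathbb Z_{\ge0}^m$ and pairs of tuples $(q_1,\dots,q_m)\in\mathbb Z_{\ge0}^m$, $(r_1,\dots,r_m)$ with $0\le r_j<j$. This bijection comes from division with remainder, so it is immediate.

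For \eqref{eq:binomial-mixture}, fix a residue tuple $(r_j)$. By the identity $\sum k_j/j=\sum q_j+\sum r_j/j$, the tuple $(k_j)$ is counted by $s(n,m)$ exactly when $\sum r_j/j$ is an integer $h$ and $\sum_j q_j=n-h$. First, $0\le r_j/j<1$, and $r_1=0$ forces the $j=1$ term to vanish. Hence any such $h$ satisfies $0\le h\le \sum_{j=2}^m (j-1)/j<m-1$, so $h\in\{0,\dots,m-1\}$. Second, for each admissible residue tuple, the number of $q$-tuples is the number of weak compositions of $n-h$ into $m$ parts, which is $\binom{n-h+m-1}{m-1}$ for $n-h\ge0$ and $0$ otherwise. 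Grouping residue tuples by the value of $h$ then gives \eqref{eq:binomial-mixture}.

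For \eqref{eq:cm-total}, the sum $\sum_h c_m(h)$ counts residue tuples with $\sum_j r_j/j\in\mathbb Z$. Multiplying by $L_m$ turns this into the congruence
\[
 \sum_{j\le m} r_j\,\frac{L_m}{j}\equiv 0 \pmod{L_m}.
\]
Equivalently, the map
\[
 \Phi:\prod_{j\le m}\mathbb Z/j\mathbb Z\to \mathbb Q/\mathbb Z,\qquad (r_j)\mapsto \sum_j r_j/j,
\]
is well defined and a group homomorphism, since $r_j/j$ modulo $1$ depends only on $r_j\bmod j$. The residue tuples $0\le r_j<j$ are exactly a set of representatives of the domain, so the quantity we want is $|\ker\Phi|$.

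The image of $\Phi$ is the subgroup of $\mathbb Q/\mathbb Z$ generated by $1/1,1/2,\dots,1/m$. This subgroup is $\tfrac1{L_m}\mathbb Z/\mathbb Z$, because the cyclic subgroups $\tfrac1j\mathbb Z/\mathbb Z$ generate the cyclic group of order $\operatorname{lcm}(1,\dots,m)$. Concretely, Bézout gives integers $a_j$ with $\sum_j a_j L_m/j=\gcd_j(L_m/j)=1$; the last equality holds because for each prime $p$ the exponent $j=p^{a}$ with $a=v_p(L_m)$ makes $L_m/j$ coprime to $p$. Thus $|\operatorname{im}\Phi|=L_m$, and the domain has order $m!$. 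By the first isomorphism theorem, $|\ker\Phi|=m!/L_m$.

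The only step needing any care is this surjectivity onto a group of order exactly $L_m$, and the gcd argument above settles it. The rest is bookkeeping.
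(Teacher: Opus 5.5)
Your proof is correct and is essentially the paper's argument. The quotient--residue bijection together with the count of weak compositions gives the first identity. For the second, the admissible residue vectors form the kernel of a homomorphism that is surjective because $\gcd_j(L_m/j)=1$: you map into $\mathbb Q/\mathbb Z$ with image $\tfrac1{L_m}\mathbb Z/\mathbb Z$, while the paper maps into $\mathbb Z/L_m\mathbb Z$, which is the same map scaled by $L_m$. One cosmetic slip: the strict bound $\sum_{j=2}^m (j-1)/j<m-1$ fails for $m=1$, where both sides are $0$, but $h\le m-1$ is all you need.
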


\begin{proof}
For a residue vector with integral sum $h$, the quotient variables satisfy
$q_1+\cdots+q_m=n-h$, giving the binomial coefficient in \eqref{eq:binomial-mixture}.
For \eqref{eq:cm-total}, consider
\[
 \Phi_m:\prod_{j\le m}\mathbb Z/j\mathbb Z\longrightarrow\mathbb Z/L_m\mathbb Z,
 \qquad
 \Phi_m(r)=\sum_{j\le m}r_j\frac{L_m}{j}\pmod{L_m}.
\]
This homomorphism is surjective because
$\gcd(L_m/1,\ldots,L_m/m)=1$: for each prime power $p^a\Vert L_m$, some $j\le m$ is
divisible by $p^a$.  Hence every fiber has size $m!/L_m$, and the zero fiber is exactly
the set counted on the left of \eqref{eq:cm-total}.
\end{proof}

For $z\ge0$ set, by continuity at $z=0$,
\[
 S_j(z)=\sum_{r=0}^{j-1}e^{-zr/j}=\frac{1-e^{-z}}{1-e^{-z/j}},\qquad
 \mu_{j,z}(r)=\frac{e^{-zr/j}}{S_j(z)}.
\]
Let $R_1,\ldots,R_m$ be independent with laws $\mu_{j,z}$ and define
\[
 P_m(z)=\Pp_z\!\left(\sum_{j\le m}\frac{R_j}{j}\in\mathbb Z\right).
\]
Proposition~\ref{prop:binomial} gives $P_m(0)=L_m^{-1}$.  For $q\in\mathbb Z_{\ge0}$ define
\[
 f_m(q)=\#\left\{k_j\ge0:q=\sum_{j\le m}\frac{k_j}{j}\right\},\qquad
 G_m(z)=\sum_{q\in\mathbb Z_{\ge0}}f_m(q)e^{-zq}.
\]
Then the quotient--residue decomposition gives
\begin{equation}\label{eq:G-factor}
 G_m(z)=P_m(z)\prod_{j=1}^m\frac1{1-e^{-z/j}}.
\end{equation}
Indeed, the $q_j$ contribute $(1-e^{-z})^{-m}$, while the weighted residue sum contributes
$P_m(z)\prod_jS_j(z)$.

We next factor the residue congruence prime by prime.  For each $p\le m$, put
\[
 Q_p=p^{a_p},\qquad a_p=\max\{a:p^a\le m\},
\]
so $L_m=\prod_{p\le m}Q_p$.  For $p>\sqrt m$, write
$M_p=\lfloor m/p\rfloor$.  The only coordinates divisible by $p$ are
$p,2p,\ldots,M_pp$, and reduction modulo $p$ gives
\begin{equation}\label{eq:Ep}
 E_p:\qquad \sum_{a=1}^{M_p}\overline a\,R_{ap}\equiv0\pmod p,
\end{equation}
where $\overline a$ denotes the inverse of $a$ modulo $p$.
Distinct primes $p>\sqrt m$ act on disjoint coordinate sets.  The remaining
prime-power congruences can be imposed through one control coordinate for each
small prime.

\begin{lemma}[Small-prime controls]\label{lem:controls}
Let $h_m=\pi(\sqrt m)$.  Fix all residues except $R_{Q_p}$ for
$p\le\sqrt m$.  There is a unique choice of the control vector
$(R_{Q_p})_{p\le\sqrt m}$ that makes the full residue sum integral.  Moreover,
for every $z_1>0$ there are constants $0<c_{z_1}<C_{z_1}<\infty$ such that
\begin{equation}\label{eq:control-weight}
 c_{z_1}\le Q\mu_{Q,z}(r)\le C_{z_1}
 \qquad(0\le z\le z_1,\ 0\le r<Q).
\end{equation}
If $\Pp_z^{>}$ denotes the product residue law conditioned on all large-prime
events $E_q$, $q>\sqrt m$, and projected onto the noncontrol coordinates, and
$\Pp_z^{\rm all}$ is the corresponding law after imposing the full congruence,
then
\begin{equation}\label{eq:RN-controls}
 e^{-C_{z_1}h_m}\le
 \frac{\dd\Pp_z^{\rm all}}{\dd\Pp_z^{>}}
 \le e^{C_{z_1}h_m}.
\end{equation}
Furthermore,
\begin{equation}\label{eq:control-prob}
 \Pp_z\!\left(\text{full congruence}\mid E_q\text{ for all }q>\sqrt m\right)
 =\left(\prod_{p\le\sqrt m}\frac1{Q_p}\right)e^{O_{z_1}(h_m)}.
\end{equation}
\end{lemma}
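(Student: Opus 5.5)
We would prove the lemma by working entirely on the abelian side via the Chinese remainder theorem. By Proposition~\ref{prop:binomial}, integrality of $\sum_j R_j/j$ is equivalent to $\Phi_m(R)\equiv0\pmod{L_m}$. Since $L_m=\prod_{p\le m}Q_p$, this splits into one congruence modulo each $Q_p$:
\[
 \sum_{j\le m}R_j\,\frac{L_m}{j}\equiv0\pmod{Q_p}.
\]
For $p>\sqrt m$ we have $Q_p=p$. Here only the terms with $p\mid j$ survive, and dividing by the unit $L_m/p$ gives exactly $E_p$ in \eqref{eq:Ep}. For $p\le\sqrt m$, the coordinate $j=Q_p$ enters with coefficient $L_m/Q_p$, which is a unit modulo $Q_p$. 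Every other $j$ has $v_p(j)<a_p$, so its coefficient is divisible by $p$. Moreover, $R_{Q_p}$ appears in the congruence modulo $Q_{p'}$ for $p'\ne p$ only with a coefficient divisible by $Q_{p'}$, since $Q_{p'}\mid L_m/Q_p$. Hence the congruence modulo $Q_p$ reads $u_pR_{Q_p}\equiv -(\text{terms not involving any control})\pmod{Q_p}$. Because $R_{Q_p}$ ranges over $\{0,\dots,Q_p-1\}$, it has a unique solution. This gives the existence-and-uniqueness claim.

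For the bound \eqref{eq:control-weight}, we write $Q\mu_{Q,z}(r)=Qe^{-zr/Q}(1-e^{-z/Q})/(1-e^{-z})$. Since $0\le r/Q<1$, the factor $e^{-zr/Q}\in[e^{-z_1},1]$. The remaining factor $Q(1-e^{-z/Q})/(1-e^{-z})$ lies between $1$ and $z/(1-e^{-z})\le z_1/(1-e^{-z_1})$, using $x e^{-x}\le 1-e^{-x}\le x$ with $x=z/Q$. Both bounds are uniform in $Q$, which fixes $c_{z_1}$ and $C_{z_1}$.

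Next we derive \eqref{eq:control-prob} and \eqref{eq:RN-controls}. The large-prime events $E_q$ do not involve any control coordinate: $Q_p$ with $p\le\sqrt m$ is a power of a small prime and so is not divisible by any $q>\sqrt m$. Hence, conditioning on all $E_q$, the controls remain independent of the noncontrol coordinates $\omega$ and keep their product laws $\mu_{Q_p,z}$. Given $\omega$, the full congruence holds iff each $R_{Q_p}$ equals its unique value $r_p(\omega)$. Therefore
\[
 \Pp_z(\text{full}\mid \omega,\ E_q\ \forall q)=\prod_{p\le\sqrt m}\mu_{Q_p,z}(r_p(\omega)),
\]
and by \eqref{eq:control-weight} this equals $\prod_p Q_p^{-1}\cdot\theta(\omega)$ with $e^{-h_m\log(1/c_{z_1})}\le\theta\le C_{z_1}^{h_m}$. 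Averaging over $\omega$ under $\Pp_z^{>}$ gives \eqref{eq:control-prob}, after enlarging $C_{z_1}$ so that it also dominates $\log(1/c_{z_1})$ and $\log C_{z_1}$. The Radon--Nikodym derivative is $\theta(\omega)/\E^{>}\theta$, a ratio of two quantities each in $[e^{-Ch_m},e^{Ch_m}]$, which gives \eqref{eq:RN-controls} after doubling the constant.

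The main obstacle is the CRT bookkeeping in the first step. We must check that each small-prime congruence involves its own control with a unit coefficient and no other control, so that the system is triangular (in fact diagonal) in the controls. We must also check that no control coordinate $Q_p$ lies in a large-prime block $\{p',2p',\dots\}$. Everything after that is the elementary uniform bound \eqref{eq:control-weight}.
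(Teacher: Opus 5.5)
Your proposal is correct and follows the paper's proof. The congruence modulo $Q_p$ contains the control $R_{Q_p}$ with the unit coefficient $L_m/Q_p$ and no other control (since $Q_p\mid L_m/Q_{p'}$ for $p'\ne p$); then the explicit formula for $Q\mu_{Q,z}(r)$, averaging $\prod_{p\le\sqrt m}\mu_{Q_p,z}(r_p(\omega))$ under $\Pp_z^{>}$, and Bayes' formula give \eqref{eq:control-weight}--\eqref{eq:control-prob} exactly as in the paper.

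There are two harmless slips. First, the remark that every other $j$ has $v_p(j)<a_p$ is false: for $m=20$, $p=3$, the noncontrol coordinate $j=18=2Q_3$ has a unit coefficient modulo $9$. You never use this remark, so nothing breaks. Second, $xe^{-x}\le1-e^{-x}$ yields only the lower bound $z/(e^z-1)\ge z_1/(e^{z_1}-1)$ rather than $1$; the value $1$ follows from monotonicity of $(1-e^{-x})/x$, but the weaker bound is already enough.
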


\begin{proof}
Modulo $Q_p$, the coefficient $L_m/Q_p$ of the control $R_{Q_p}$ is
invertible.  If $q\ne p$, the coefficient $L_m/Q_q$ of the other pure
prime-power control is divisible by $Q_p$.  Hence the congruence modulo $Q_p$
determines $R_{Q_p}$ uniquely, independently of the other controls; the Chinese
remainder theorem gives the simultaneous choice.  Changing such a small-prime
control does not disturb any large-prime condition $E_q$, because $L_m/Q_p$ is
divisible by every prime $q>\sqrt m$.

The identity
\[
 Q\mu_{Q,z}(r)=
 \frac{Q(1-e^{-z/Q})}{1-e^{-z}}e^{-zr/Q}
\]
(with its continuous value $1$ at $z=0$) proves \eqref{eq:control-weight}
uniformly on $0\le z\le z_1$.

Now fix a noncontrol configuration $X$ satisfying all large-prime conditions,
and let $r^*(X)$ be its unique completing control vector.  Its conditional
weight is
\[
 W_z(X)=\prod_{p\le\sqrt m}\mu_{Q_p,z}(r^*_{Q_p}(X)).
\]
By \eqref{eq:control-weight},
\[
 \left(\prod_{p\le\sqrt m}\frac1{Q_p}\right)c^{h_m}
 \le W_z(X)\le
 \left(\prod_{p\le\sqrt m}\frac1{Q_p}\right)C^{h_m}.
\]
Taking expectation under $\Pp_z^{>}$ gives \eqref{eq:control-prob}, while
Bayes' formula
\[
 \frac{\dd\Pp_z^{\rm all}}{\dd\Pp_z^{>}}(X)
 =\frac{W_z(X)}{\E_z^{>}W_z(X)}
\]
gives \eqref{eq:RN-controls}.
\end{proof}

Since the large-prime events are independent, Lemma~\ref{lem:controls} gives
\begin{equation}\label{eq:P-block}
 \log P_m(z)
 =-\psi(m)+\sum_{\sqrt m<p\le m}\log F_{p,M_p}(z)
 +O_{z_1}(h_m),
\end{equation}
where
\[
 F_{p,M}(z)=p\,\Pp_z(E_p).
\]
The Radon--Nikodym comparison \eqref{eq:RN-controls} will be used again in the
coefficient localization below.

\section{Large-prime blocks}\label{sec:blocks}

We now measure how far the residue congruence is from perfect uniformity.  Large primes
are especially useful because their coordinate sets are disjoint, so their effects can
be analyzed independently and then added on the logarithmic scale.  These blocks contain
all corrections visible on the scale $m/(\log m)^R$.  Because the final expansion is
claimed to every fixed order, we keep the derivative bounds below explicit rather than
appeal to a formal differentiation argument.

\begin{lemma}[Fourier formula]\label{lem:Fourier}
For $p>M$,
\begin{equation}\label{eq:FpM}
 F_{p,M}(z)=\sum_{b=0}^{p-1}\prod_{a=1}^{M}
 \frac{1-e^{-z/(ap)}}{1-e^{-z/(ap)}e^{2\pi i b\overline a/p}}.
\end{equation}
\end{lemma}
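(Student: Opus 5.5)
We expand the indicator of the event $E_p$ with additive characters modulo $p$ and then use independence of the coordinates $R_{ap}$, $1\le a\le M$. Write $e_p(x)=e^{2\pi i x/p}$. Orthogonality gives
\[
 \mathbf 1\Big\{\sum_{a=1}^{M}\overline a\,R_{ap}\equiv0\pmod p\Big\}
 =\frac1p\sum_{b=0}^{p-1}\prod_{a=1}^{M}e_p\big(b\,\overline a\,R_{ap}\big).
\]
Taking $\Pp_z$-expectations and multiplying by $p$, we get
\[
 F_{p,M}(z)=\sum_{b=0}^{p-1}\prod_{a=1}^{M}\E_z\,e_p\big(b\,\overline a\,R_{ap}\big).
\]
The factorization is legitimate because the $R_j$ are independent under $\Pp_z$. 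The hypothesis $p>M$ ensures that each $a\le M$ is nonzero modulo $p$, so $\overline a$ is defined. Different $a$ give distinct coordinates $ap\le Mp\le m$.

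It remains to evaluate each factor, which is a finite geometric sum. With $j=ap$ and $\omega=e_p(b\overline a)$, we have
\[
 \E_z\,\omega^{R_j}=\frac{1}{S_j(z)}\sum_{r=0}^{j-1}\big(e^{-z/j}\omega\big)^r
 =\frac{1-e^{-z/j}}{1-e^{-z}}\cdot\frac{1-e^{-z}\omega^{j}}{1-e^{-z/j}\omega}.
\]
Since $j=ap$ is a multiple of $p$ and $\omega$ is a $p$-th root of unity, $\omega^j=1$. The factor $(1-e^{-z})$ therefore cancels, and we obtain exactly $\frac{1-e^{-z/(ap)}}{1-e^{-z/(ap)}\omega}$. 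Substituting this into the product gives \eqref{eq:FpM}.

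The only delicate point is the case $z=0$ together with degenerate denominators. For $z>0$ we have $|e^{-z/j}\omega|<1$, so no denominator vanishes and the geometric-sum formula is valid. At $z=0$ the law $\mu_{j,0}$ is uniform on $\{0,\dots,j-1\}$. In that case we interpret each factor by continuity: it tends to $1$ if $\omega=1$ and to $0$ otherwise. For $b\neq0$, every $\omega=e_p(b\overline a)$ differs from $1$, so only the $b=0$ term survives, giving $F_{p,M}(0)=1$. This agrees with the direct computation, since $R_p$ is uniform modulo $p$, which forces $\Pp_0(E_p)=1/p$. So \eqref{eq:FpM} holds for all $z\ge0$ with the continuous convention already adopted for $S_j$.
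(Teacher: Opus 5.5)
Your proof is correct and is essentially the paper's argument: you apply the finite Fourier filter to $E_p$, factor by independence, and evaluate each characteristic sum as a geometric series whose numerator $1-e^{-z}\omega^{ap}=1-e^{-z}$ cancels against $S_{ap}(z)$. Your explicit treatment of $z=0$, where only the $b=0$ mode survives and $F_{p,M}(0)=1$, is a correct addition that the paper's proof leaves to the continuity convention.
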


\begin{proof}
Apply the finite Fourier filter to \eqref{eq:Ep}.  For the coordinate $ap$, the normalized
characteristic sum is
\[
 \frac1{S_{ap}(z)}\sum_{r=0}^{ap-1}e^{-zr/(ap)}e^{2\pi i b\overline a r/p}
 =\frac{1-e^{-z/(ap)}}{1-e^{-z/(ap)}e^{2\pi i b\overline a/p}},
\]
because the numerator of the geometric sum is $1-e^{-z}$.
\end{proof}

Let $D_M=\operatorname{lcm}(1,\ldots,M)$.

\begin{proposition}[Block limits and uniform estimates]\label{prop:block}
For fixed $M$, as $p\to\infty$ through primes,
$F_{p,M}(z)\to F_M(z)$ locally uniformly for $z\ge0$, where
\begin{equation}\label{eq:FM}
 F_1(z)=\frac{z}{1-e^{-z}},\qquad
 F_M(z)=\sum_{\ell\in\mathbb Z}
 \left(\frac{z}{z+2\pi i\ell D_M}\right)^M\quad(M\ge2),
\end{equation}
with $F_M(0)=1$.  Each $F_M$ is positive.

More precisely, fix $z_1>0$ and an integer $J\ge0$.  There are constants
$C_J,B_J<\infty$ and $0<\rho<1$, depending only on $J$ and $z_1$, such that
for $M\ge2$ and $0\le j\le J$,
\begin{equation}\label{eq:block-decay}
 \sup_{0\le z\le z_1}|\partial_z^j\log F_M(z)|
 \le C_JM^{B_J}\rho^M.
\end{equation}
Every assertion below involving the finite block $F_{p,M}$ is understood for
primes $p>M$.  In that range the same estimate holds for $F_{p,M}$.  If, in
addition, $M\le p^{1/4}$, then
\begin{equation}\label{eq:block-quant}
 \sup_{0\le z\le z_1}|\partial_z^j(F_{p,M}-F_M)(z)|
 \le C_JM^{B_J}\left(\frac{\log p}{p}+\frac1{\sqrt p}\right).
\end{equation}
For $M=1$, the corresponding differences are $O_{z_1,J}(p^{-1})$.
\end{proposition}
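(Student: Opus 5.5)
The plan is to start from the Fourier formula of Lemma~\ref{lem:Fourier} and pass to the limit termwise. Write $b/p=\beta$ and use $e^{-z/(ap)}=1-z/(ap)+O(z^2/(ap)^2)$. The $b=0$ term equals $1$. For $b\neq0$, the $a$-th factor has modulus at most $1$, and it is close to $1$ only when $b\overline a/p$ is close to an integer, i.e.\ when $\|b\overline a/p\|\lesssim z/(ap)$.

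\emph{Case $M=1$.} Here $\overline a=1$ and the factor is $(1-e^{-z/p})/(1-e^{-z/p}e^{2\pi ib/p})$. Summing over $b$ gives exactly $p(1-e^{-z/p})/(1-e^{-z})$, since this is the Fourier filter for the uniform-modulo-$p$ event. This tends to $z/(1-e^{-z})$ with error $O(z^2/p)$, and derivatives behave the same way.

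\emph{Case $M\ge2$.} The dominant contributions come from $b$ for which $b\overline a\equiv c_a \pmod p$ with $c_a$ small, simultaneously for all $a\le M$. Using $a\overline a\equiv1$, the condition $b\overline a\equiv c_a$ means $b\equiv ac_a$. So all the quantities $ac_a$ must agree modulo $p$. For small integers ($|c_a|\le p^{1/2}/M$, say) they agree as integers, so $b\equiv \ell D_M$ with $c_a=\ell D_M/a$ for some integer $\ell$.

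For such $b$ the $a$-th factor equals
\[
\frac{z/(ap)+O(p^{-2})}{z/(ap)-2\pi i\,\ell D_M/(ap)+O(\cdot)},
\]
which is asymptotically $z/(z-2\pi i\ell D_M)$. The product over $a$ therefore gives the $\ell$-th term of \eqref{eq:FM}, after the relabelling $\ell\to-\ell$.

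The remaining $b$ are treated with the elementary bound
\[
\left|\frac{1-x}{1-xe^{2\pi i\theta}}\right|\le \frac{C z/(ap)}{\|\theta\|}.
\]
When at least two indices $a$ have $\|b\overline a/p\|\ge p^{-1/2}$, the product is at most $(Cz_1\sqrt p/p)^2$ times bounded factors. The number of $b$ for which only a few indices are "small" can be counted, and this produces the error $(\log p)/p+p^{-1/2}$ in \eqref{eq:block-quant}. The restriction $M\le p^{1/4}$ keeps the integer-agreement argument valid and keeps the powers of $M$ polynomial. Derivatives in $z$ are handled by differentiating each factor; this costs at most polynomial factors in $M$ and in $1/\|\theta\|$ and does not change the counting.

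\emph{Positivity and \eqref{eq:block-decay}.} $F_{p,M}(z)=p\,\Pp_z(E_p)>0$, and the limit is $\ge0$. Strict positivity of $F_M$ would follow from Poisson summation: $F_M$ equals $D_M$ times the density at integers of a sum of independent variables. Concretely, $\sum_\ell \hat g(2\pi\ell D_M)$ with $\hat g(t)=(z/(z+it))^M$ is the periodization of a Gamma$(M,z)$ density, and that periodization is positive.

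For \eqref{eq:block-decay}, separate the $\ell=0$ term, which equals $1$. The remaining terms have modulus at most $(z_1/(2\pi D_M|\ell|))^M$. Since $D_M\ge 2^{M-1}$ for $M\ge 2$ (indeed $D_M\ge e^{cM}$), their sum is $O(\rho^M)$, so $\log F_M=O(\rho^M)$. Derivatives bring at most factors $M^{j}$, which gives the bound $M^{B_J}\rho^M$.

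For $F_{p,M}$ uniformly in $p>M$, without the constraint $M\le p^{1/4}$, the same bound must come from the finite sum directly. Every $b\ne0$ needs all $M$ factors near $1$ to contribute, and by the factor bound this forces $|\ell|D_M\lesssim p$-scale configurations, each of size $\prod_a (z/(ap\|\cdot\|))$. This is the step I expect to be the main obstacle: a uniform-in-$p$ large-sieve-type count showing that the $b\ne0$ terms sum to $O(M^{B}\rho^M)$ even when $M$ is comparable to $p$. I would first try bounding $\prod_a\min(1,Cz_1/(ap\|b\overline a/p\|))$ by its geometric mean and summing over $b$ with Hölder's inequality.
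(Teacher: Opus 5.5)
Your proposal has a genuine gap at the step you flag yourself, and the gap affects more than the uniform bound for $F_{p,M}$. Passing to the limit termwise in the Fourier formula needs a summable majorant for the nonzero modes that is uniform in $p$, because the range $|b|\le(p-1)/2$ grows with $p$. The tail part of \eqref{eq:block-quant} needs the same thing quantitatively. The minor-arc estimate you write down does not supply it. Two factors of size $\ll z_1/(a\sqrt p)$ give $O(1/p)$ per frequency, and there are $\asymp p$ frequencies, so the total is $O(1)$, not $O(p^{-1/2})$. The count of frequencies with few small factors that should repair this is not carried out, and for $M=2$ there are no further factors to spend. As submitted, neither the convergence $F_{p,M}\to F_M$, nor the $p$-uniform version of \eqref{eq:block-decay}, nor \eqref{eq:block-quant} is established.

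The missing idea is a bound on every coordinate factor in terms of $b$ alone. The paper writes $R_{ap}=au+v$ with $0\le u<p$ and $0\le v<a$, and uses $a\overline a\equiv1\pmod p$. This splits the $ap$-factor as $A_p(b)B_{p,a}(b)$, where $A_p(b)=(1-e^{-z/p})/(1-e^{(-z+2\pi ib)/p})$ is independent of $a$, and $B_{p,a}(b)$ is a weighted average of unimodular numbers, so $|B_{p,a}(b)|\le1$. For centered $b\ne0$ one has $|A_p(b)|\le\min\{\rho,C/(1+|b|)\}$. Hence $|U_{p,M}(b)|\le C^2\rho^{M-2}(1+|b|)^{-2}$ uniformly in $p>M$. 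This single bound gives, with no counting:
\begin{itemize}
\item the decay estimate for $F_{p,M}$;
\item dominated convergence for the termwise limit;
\item the tail bound $\sum_{|b|>\sqrt p}|U_{p,M}(b)|\ll p^{-1/2}$.
\end{itemize}
The limit itself then comes from $B_{p,a}(b)=\mathbf 1_{a\mid b}+O((1+|b|)/p)$ for $|b|\le\sqrt p$, which replaces your integer-agreement argument, together with a telescoping comparison of products.

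In your own notation the key fact is $a|c_a|\ge|b|$. This holds because $ac_a\equiv b\pmod p$ and $b$ is the centered representative. Feed it into the exact modulus $|1-xe^{2\pi i\theta}|^2=(1-x)^2+4x\sin^2\pi\theta$, not into your cruder bound capped at $1$. You get $|g_a(b)|\le\min\{\rho',Cz/|b|\}$ with $\rho'<1$ for every $a\le M$, which is exactly what your major/minor-arc scheme lacks.

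Your H\"older idea can in fact be completed for \eqref{eq:block-decay}. The map $b\mapsto b\overline a$ permutes the nonzero residues, so each $\sum_{b\ne0}\min\{1,Cz_1/(a|c_a|)\}^M$ is bounded independently of $p$ and is $\ll(Cz_1/a)^M$ once $a>Cz_1$. As submitted, however, it is only a plan. The minor-arc part of \eqref{eq:block-quant} would still need the pointwise bound above.
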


\begin{proof}
We give the details because the estimates below are used after arbitrarily
many, but fixed, differentiations with respect to $z$.

Write
\[
 R_{ap}=au+v,\qquad 0\le u<p,\quad 0\le v<a.
\]
The factor belonging to the coordinate $ap$ in
Lemma~\ref{lem:Fourier} splits as
\[
 A_p(b)B_{p,a}(b),
\]
where
\begin{equation}\label{eq:Ap}
 A_p(b)=\frac{1-e^{-z/p}}{1-e^{(-z+2\pi ib)/p}}
\end{equation}
and
\begin{equation}\label{eq:Bpa}
 B_{p,a}(b)=
 \frac{\displaystyle\sum_{v=0}^{a-1}e^{-zv/(ap)}
 e^{2\pi ib\overline a v/p}}
 {\displaystyle\sum_{v=0}^{a-1}e^{-zv/(ap)}}.
\end{equation}
We always take the centered representative $|b|\le(p-1)/2$.

We first record two elementary bounds.  They are the only estimates on the
individual factors that will be needed.  Let
\[
 a_b(z)=\frac{z}{z-2\pi ib}\qquad (b\ne0).
\]
For every fixed $J$ and $z_1$ there are constants $C_J$ and $0<\rho<1$
such that, for $b\ne0$, $0\le z\le z_1$, and $0\le j\le J$,
\begin{equation}\label{eq:A-detailed}
 |\partial_z^j A_p(b)|\le \frac{C_J}{1+|b|},
 \qquad |A_p(b)|\le\rho,
\end{equation}
and
\begin{equation}\label{eq:B-detailed}
 |B_{p,a}(b)|\le1,\qquad
 |\partial_z^jB_{p,a}(b)|\le C_J\quad(1\le j\le J).
\end{equation}
To see the first estimate, put
$\phi(w)=(1-e^{-w})/w$, with $\phi(0)=1$.  Then
\begin{equation}\label{eq:A-phi}
 A_p(b)=a_b(z)\,
 \frac{\phi(z/p)}{\phi((z-2\pi ib)/p)}.
\end{equation}
For centered $b$, the second argument of $\phi$ stays in the compact strip
$0\le\Re w\le z_1/2$, $|\Im w|\le\pi$ (the finitely many smaller $p$ are
absorbed into the constants).  The function $\phi$ has no zero there.
Differentiating \eqref{eq:A-phi} therefore gives the first inequality in
\eqref{eq:A-detailed}.  The uniform contraction follows directly from
\[
 |A_p(b)|^2=
 \frac{(1-e^{-z/p})^2}
 {(1-e^{-z/p})^2+2e^{-z/p}(1-\cos(2\pi b/p))}.
\]
For $b\ne0$, the denominator contains a positive term of size at least a
constant multiple of $p^{-2}$, whereas
$1-e^{-z/p}\le z_1/p$.  Hence the right side is bounded by a constant
strictly smaller than $1$, depending only on $z_1$.

For \eqref{eq:B-detailed}, note first that $B_{p,a}(b)$ is a weighted average
of numbers of modulus one, so $|B_{p,a}(b)|\le1$.  If
$x_v=v/(ap)$, then $0\le x_v<1/p$ and the numerator and denominator in
\eqref{eq:Bpa} are sums of $e^{-zx_v}$, with or without a unit-modulus
factor.  Their $j$th derivatives are bounded by $a p^{-j}$.  The denominator
is at least $ae^{-z_1/p}$.  Repeated use of the quotient rule now proves the
second estimate in \eqref{eq:B-detailed}; in fact one obtains the stronger
bound $O_J(p^{-j})$ for $j\ge1$.

We next identify the limit of a fixed Fourier mode.  If
$a\overline a=1+\nu_ap$, then $\gcd(\nu_a,a)=1$ and
\[
 e^{2\pi ib\overline a v/p}
 =e^{2\pi ibv/(ap)}e^{2\pi ib\nu_av/a}.
\]
For fixed $a$ and $b$, the first factor tends to $1$.  The average of the
second factor over $0\le v<a$ equals $1$ if $a\mid b$ and $0$ otherwise.
Thus
\[
 B_{p,a}(b)\longrightarrow \mathbf 1_{a\mid b}.
\]
Together with $A_p(b)\to a_b(z)$, a mode survives precisely when every
$a\le M$ divides $b$, or equivalently when $D_M\mid b$.  The mode $b=0$
is identically $1$.  This proves the formula \eqref{eq:FM} once the tails are
controlled; that control is included below and also gives local uniform
convergence.  The positivity of the limiting expression can also be seen
directly.  For $z>0$, Poisson summation applied to the gamma density
$z^Mx^{M-1}e^{-zx}/(M-1)!$ on $x>0$ gives
\begin{equation}\label{eq:FM-positive}
 F_M(z)=\frac{z^M}{D_M^M(M-1)!}
 \sum_{r=1}^{\infty}r^{M-1}e^{-zr/D_M}>0
 \qquad(M\ge2).
\end{equation}
At $z=0$ positivity follows by continuity from $F_M(0)=1$.

We now prove exponential decay in the block size.  For a nonzero mode write
\[
 U_{p,M}(b;z)=A_p(b)^M\prod_{a\le M}B_{p,a}(b).
\]
Apply Leibniz' rule to $\partial_z^jU_{p,M}$.  There are at most
$(2M)^j$ terms.  In any such term at most $j$ of the $M$ copies of $A_p$
are differentiated.  Every differentiated copy is
$O_J((1+|b|)^{-1})$ by \eqref{eq:A-detailed}; every undifferentiated copy is
both $O((1+|b|)^{-1})$ and at most $\rho$ in modulus.  Using two copies for
summability in $b$ and all remaining undifferentiated copies for the
contraction, we obtain, after changing the constants,
\begin{equation}\label{eq:mode-decay}
 |\partial_z^jU_{p,M}(b;z)|
 \le C_JM^{B_J}\rho^{M-O_J(1)}(1+|b|)^{-2}.
\end{equation}
When $M$ is bounded in terms of $J$, the same conclusion follows after
increasing $C_J$.  Summing \eqref{eq:mode-decay} over $b\ne0$ gives
\begin{equation}\label{eq:Fminus1-decay}
 \sup_{0\le z\le z_1}
 |\partial_z^j(F_{p,M}(z)-1)|
 \le C_JM^{B_J}\rho^M.
\end{equation}
The limiting series is handled in exactly the same way, with nonzero modes
$b=D_M\ell$.  This also justifies passage to the limit mode by mode and shows
local uniform convergence, with derivatives of every fixed order.

Since $F_{p,M}(z)=p\,\Pp_z(E_p)>0$, every finite block is positive, and
\eqref{eq:FM-positive} proves positivity of the limiting block.  Therefore
the logarithms are well defined.  For large $M$ both
$F_{p,M}$ and $F_M$ lie, say, in $[1/2,3/2]$, so differentiating
$\log F$ expresses its derivatives as finite sums of products of derivatives
of $F-1$.  The finitely many smaller $M$ are again absorbed into the
constant.  This proves \eqref{eq:block-decay}.

It remains to quantify the convergence $F_{p,M}\to F_M$.  Put
$H=\lfloor\sqrt p\rfloor$.  We first consider $|b|\le H$.  From
\eqref{eq:A-phi} and Taylor's theorem for $\phi$ at the origin, uniformly for
$j\le J$,
\begin{equation}\label{eq:A-low}
 \left|\partial_z^j\bigl(A_p(b)-a_b(z)\bigr)\right|
 \le \frac{C_J}{p}.
\end{equation}
For the second factor, use again
$a\overline a=1+\nu_ap$ and write
\[
 e^{-zv/(ap)}e^{2\pi ib\overline a v/p}
 =e^{(-z+2\pi ib)v/(ap)}e^{2\pi ib\nu_av/a}.
\]
Because $v/a\le1$ and $|b|\le H$, the first exponential is
$1+O_{z_1}((1+|b|)/p)$, uniformly in $a$.  The average of the second
exponential is $\mathbf1_{a\mid b}$.  Differentiating only inserts powers of
$v/(ap)\le1/p$.  Hence
\begin{equation}\label{eq:B-low}
 \left|\partial_z^j\bigl(B_{p,a}(b)-\mathbf1_{a\mid b}\bigr)\right|
 \le C_J\frac{1+|b|}{p}
 \qquad (0\le j\le J).
\end{equation}

Define the limiting mode
\[
 U_M(b;z)=\mathbf1_{D_M\mid b}\,a_b(z)^M
\]
for $b\ne0$, and set both zero modes equal to $1$.  We spell out the
comparison, since this is where the dependence on $M$ and $b$ enters the
quantitative error.  Regard $U_{p,M}$ as a product of $2M$ factors,
consisting of $M$ copies of $A_p(b)$ followed by the factors $B_{p,a}(b)$,
and regard $U_M$ as the corresponding product of $M$ copies of $a_b(z)$
followed by the constants $\mathbf1_{a\mid b}$.  For any two lists
$(X_r)_{r\le2M}$ and $(Y_r)_{r\le2M}$ we use the exact telescoping identity
\begin{equation}\label{eq:product-telescope}
 \prod_{r=1}^{2M}X_r-\prod_{r=1}^{2M}Y_r
 =\sum_{s=1}^{2M}(X_s-Y_s)
   \prod_{r<s}X_r\prod_{r>s}Y_r.
\end{equation}
After taking $j$ derivatives, Leibniz' rule produces at most
$C_JM^{J+1}$ terms: there are $2M$ choices for the telescoping position and,
for each one, at most $(2M)^j$ ways to distribute the derivatives.  The same
bounds used above also give
\[
 |\partial_z^q a_b(z)|\le \frac{C_J}{1+|b|}
 \qquad(0\le q\le J).
\]
If the distinguished difference in \eqref{eq:product-telescope} is an
$A$-factor, \eqref{eq:A-low} contributes $O_J(p^{-1})$, while the remaining
$M-1$ $A$-type factors, differentiated or not, contribute
$O_J((1+|b|)^{1-M})$.  All $B$-type factors and their derivatives are
$O_J(1)$.  If the distinguished difference is a $B$-factor, then
\eqref{eq:B-low} contributes $O_J((1+|b|)/p)$, while all $M$ $A$-type
factors contribute $O_J((1+|b|)^{-M})$.  Thus both kinds of telescoping terms
have the same bound.  Absorbing the polynomial number of Leibniz terms into
$M^{B_J}$ gives
\begin{equation}\label{eq:low-mode-diff}
 |\partial_z^j(U_{p,M}(b;z)-U_M(b;z))|
 \le \frac{C_JM^{B_J}}p(1+|b|)^{1-M}
\end{equation}
for $1\le |b|\le H$.  For $M=2$ the sum over $b$ is
$O((\log H)/p)=O((\log p)/p)$; for $M\ge3$ the series
$\sum_{b\ne0}(1+|b|)^{1-M}$ is uniformly bounded.  Therefore
\begin{equation}\label{eq:low-sum}
 \sum_{1\le|b|\le H}
 |\partial_z^j(U_{p,M}(b;z)-U_M(b;z))|
 \le C_JM^{B_J}\frac{\log p}{p}.
\end{equation}

For the remaining frequencies we do not compare modes individually.
The bound \eqref{eq:mode-decay}, with two powers reserved for summability,
gives
\[
 \sum_{H<|b|\le(p-1)/2}|\partial_z^jU_{p,M}(b;z)|
 \le C_JM^{B_J}H^{-1}.
\]
The same estimate holds for the tail of the limiting series, since its
nonzero modes are a subset of the nonzero integers.  As $H\asymp\sqrt p$,
combining this with \eqref{eq:low-sum} proves \eqref{eq:block-quant}.  The
assumption $M\le p^{1/4}$ is more than enough to keep the polynomial factors
in $M$ harmless in the Taylor estimates above.  Finally, for $M=1$ the
identity
\[
 F_{p,1}(z)=p\frac{1-e^{-z/p}}{1-e^{-z}}
\]
gives the stated $O_{z_1,J}(p^{-1})$ estimate directly.
\end{proof}

For $r\ge1$ define
\[
 P_r(x)=(r-1)!\sum_{j=0}^{r-1}\frac{x^j}{j!},\qquad
 a_{r,M}=\frac{P_r(\log M)}M-\frac{P_r(\log(M+1))}{M+1},
\]
and
\begin{equation}\label{eq:Ar}
 A_r(z)=\sum_{M=1}^{\infty}a_{r,M}\log F_M(z).
\end{equation}
Proposition~\ref{prop:block} makes these series, and every fixed number of derivatives,
absolutely and uniformly convergent on bounded intervals.  In particular $A_r(0)=0$.

\begin{theorem}[Prime-block expansion]\label{thm:prime-block}
Fix integers $R\ge1$ and $J\ge0$.  Uniformly for $0\le z\le z_1$ and
$0\le j\le J$,
\begin{equation}\label{eq:prime-block-exp}
 \sum_{\sqrt m<p\le m}\partial_z^j\log F_{p,\lfloor m/p\rfloor}(z)
 =m\sum_{r=1}^{R}\frac{A_r^{(j)}(z)}{(\log m)^r}
 +o\!\left(\frac{m}{(\log m)^R}\right).
\end{equation}
\end{theorem}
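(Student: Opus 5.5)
Group the primes in $(\sqrt m,m]$ according to the block size $M=\lfloor m/p\rfloor$, so that $M$ ranges over $1\le M<\sqrt m$ and the primes with block size $M$ are exactly those in $I_M=(m/(M+1),m/M]$. Fix a cutoff $K=K(m)$ tending to infinity slowly; the choice $K=\lfloor(\log m)^{1/2}\rfloor$ or any $K\to\infty$ with $K=(\log m)^{o(1)}$ should work. Split the sum into $M\le K$ and $M>K$.

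For $M>K$, apply \eqref{eq:block-decay} to the finite blocks. The number of primes in $I_M$ is at most $O(m/M^2+1)$, so the tail contributes
\[
\sum_{M>K}\Bigl(\frac m{M^2}+1\Bigr)C_JM^{B_J}\rho^M\ll m\rho^{K/2}.
\]
This is $o(m/(\log m)^R)$ as soon as $K\ge C\log\log m$. So I will in fact take $K=\lceil C_R\log\log m\rceil$, with $C_R$ large. The same exponential decay bounds the tail $\sum_{M>K}a_{r,M}\log F_M$ of the series \eqref{eq:Ar}, since $a_{r,M}\ll(\log M)^{r-1}/M^2$.

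For $M\le K$, every $p\in I_M$ satisfies $p\ge m/(K+1)$, so $M\le p^{1/4}$. Then \eqref{eq:block-quant} (together with the $M=1$ case) lets us replace $\partial_z^j\log F_{p,M}$ by $\partial_z^j\log F_M$. Passing from $F$ to $\log F$ uses positivity and the lower bound on $F_M$ over the compact range $z\le z_1$, which is uniform for $M\le K$ by \eqref{eq:block-decay}. The per-prime error is $K^{B}\bigl(\tfrac{K}{m}\bigr)^{1/2}$, and summing over at most $\pi(m)$ primes gives $O(m^{1/2+o(1)})$, which is negligible. The sum becomes
\[
\sum_{M\le K}\bigl(\pi(m/M)-\pi(m/(M+1))\bigr)\,\partial_z^j\log F_M(z).
\]

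Now insert the prime number theorem with the classical error term, $\pi(x)=\mathrm{li}(x)+O(x e^{-c\sqrt{\log x}})$. Expand
\[
\mathrm{li}(x)=x\sum_{r=1}^{R}\frac{(r-1)!}{(\log x)^r}+O\!\left(\frac{x}{(\log x)^{R+1}}\right)
\]
at $x=m/M$, and use $\log(m/M)=\log m-\log M$ with $\log M\le\log K=O(\log\log\log m)$. Expanding $(\log m-\log M)^{-r}$ binomially gives
\[
\frac{m}{M}\sum_{r\le R}\frac{(r-1)!}{(\log m)^r}\sum_{j<r}\binom{r-1}{j}\frac{(\log M)^j}{(r-1)!}\cdots
\]
The key bookkeeping step is to verify that collecting the coefficient of $(\log m)^{-r}$ in $\pi(m/M)$ yields exactly $\frac{m}{M}P_r(\log M)$. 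The identity to check is
\[
\sum_{s\ge1}(s-1)!\binom{r-1}{s-1}\frac{(\log M)^{r-s}}{(r-s)!}\cdot\frac{(s-1)!\ldots}{}
\]
or, more simply, that $(s-1)!\,(1-u)^{-s}$ with $u=\log M/\log m$ contributes $(s-1)!\binom{r-1}{s-1}(\log M)^{r-s}$ to the coefficient of $(\log m)^{-r}$, and
\[
\sum_{s=1}^{r}(s-1)!\binom{r-1}{s-1}x^{r-s}=(r-1)!\sum_{j=0}^{r-1}\frac{x^j}{j!}=P_r(x).
\]
Differencing between $m/M$ and $m/(M+1)$ then gives $m\,a_{r,M}$ for the coefficient of $(\log m)^{-r}$.

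Error terms are controlled as follows. The truncation remainders are $O\bigl(m(\log\log m)^{R}/(M(\log m)^{R+1})\bigr)$ per $M$, multiplied by $|\partial^j\log F_M|\ll M^B\rho^M$. Summed over $M$, this is $o(m/(\log m)^R)$. The PNT error is $O(m e^{-c\sqrt{\log m}})$ per $M$ times $K$ terms, which is also negligible. Finally, re-extend the resulting sums $\sum_{M\le K}a_{r,M}\partial_z^j\log F_M$ to all $M$ using the tail bound from the first step.

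The main obstacle is the uniformity of this bookkeeping in $M$. The $M$-dependent loss from expanding in $\log M/\log m$ must be absorbed by the exponential decay $\rho^M$ of the blocks. This is why the cutoff has to be chosen as $K\asymp\log\log m$ rather than fixed. Compatibility of the quantitative estimate \eqref{eq:block-quant} with $M\le p^{1/4}$ is automatic here, since $p\ge m/(K+1)$.
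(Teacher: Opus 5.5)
Your proposal is correct and follows the paper's proof step for step. The shared structure is: group primes by $M=\lfloor m/p\rfloor$; take a cutoff $K\asymp\log\log m$ and use \eqref{eq:block-decay} for the tail; use \eqref{eq:block-quant} to replace $F_{p,M}$ by $F_M$ for $M\le K$; then apply the de la Vall\'ee-Poussin prime number theorem, differenced between $m/M$ and $m/(M+1)$, and extend the $M$-sum to infinity. Beyond the paper, you check the coefficient identity $\sum_{s=1}^{r}(s-1)!\binom{r-1}{s-1}x^{r-s}=P_r(x)$ and the passage from $F$ to $\log F$, both of which the paper asserts without proof; the garbled intermediate display you abandoned along the way should simply be deleted.
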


\begin{proof}
Let $L=\log m$ and $K=\lfloor C\log L\rfloor$, with $C$ large in terms of $R$.
By \eqref{eq:block-decay}, blocks with $\lfloor m/p\rfloor>K$ contribute $o(m/L^R)$.
For $M\le K$, all relevant primes satisfy $p\ge m/(K+1)$, so
Proposition~\ref{prop:block}, used with the fixed derivative order $J$, permits
replacement of $F_{p,M}$ by $F_M$, with total error $o(m/L^R)$ uniformly for
$0\le j\le J$.  The prime number theorem with the de la Vall\'ee-Poussin
remainder gives
\[
 \pi(m/M)=\frac mM\sum_{r=1}^{R}\frac{P_r(\log M)}{L^r}
 +O_R\!\left(\frac{m(1+\log M)^R}{ML^{R+1}}\right).
\]
Taking the difference between $M$ and $M+1$, multiplying by
$\partial_z^j\log F_M(z)$, and summing proves \eqref{eq:prime-block-exp}; exponential
decay permits extension of the $M$-sum to infinity.
\end{proof}

Combining \eqref{eq:P-block} and Theorem~\ref{thm:prime-block} gives, uniformly on bounded
$z$-intervals,
\begin{equation}\label{eq:P-exp}
 \log P_m(z)=-\psi(m)+m\sum_{r=1}^{R}\frac{A_r(z)}{(\log m)^r}
 +o\!\left(\frac{m}{(\log m)^R}\right).
\end{equation}
By \eqref{eq:G-factor} and
\[
 -\log(1-e^{-z/j})=\log j-\log z+\frac{z}{2j}+O_{z_1}(j^{-2}),
\]
where the remainder is uniform for every $0<z\le z_1$, we obtain, uniformly
for $(2m)^{-1}\le z\le z_1$,
\begin{equation}\label{eq:G-exp}
 \log G_m(z)=\log m!-\psi(m)-m\log z
 +m\sum_{r=1}^{R}\frac{A_r(z)}{(\log m)^r}
 +o\!\left(\frac{m}{(\log m)^R}\right).
\end{equation}

\section{Moving saddle and proof of the main theorem}\label{sec:saddle}

The preceding section describes the arithmetic for a general exponential tilt $z$.
We now choose $z$ so that, under the tilted counting measure, totals near the desired
value $n$ are typical rather than rare.  This is the role of the saddle point.  To
first order it is $z=m/n$; the formal series below computes its successive corrections.

Put $t=1/\log m$ and write formally
$\mathcal A(z,t)=\sum_{r\ge1}A_r(z)t^r$.  For $\alpha\in[0,1]$, the formal
implicit-function theorem gives a unique series
\[
 W_\alpha(t)=1+\sum_{r\ge1}b_r(\alpha)t^r
\]
satisfying
\begin{equation}\label{eq:saddle-formal}
 1-\frac1{W_\alpha(t)}+\alpha\,\partial_z\mathcal A
 (\alpha W_\alpha(t),t)=0.
\end{equation}
At $t=0$ the solution is $W=1$, and the derivative of the left side with
respect to $W$ is $1$.  Define $E_r(\alpha)$ by
\begin{equation}\label{eq:Er-def}
 W_\alpha(t)-\log W_\alpha(t)+\mathcal A(\alpha W_\alpha(t),t)
 =1+\sum_{r\ge1}E_r(\alpha)t^r.
\end{equation}
The next lemma records the quantitative statement that will actually be used.

\begin{lemma}[Finite-order saddle expansion]\label{lem:finite-saddle}
Fix $R\ge1$, and let
\[
 W_{\alpha,R}(t)=1+\sum_{r=1}^{R}b_r(\alpha)t^r.
\]
Uniformly for $0\le\alpha\le1$ as $t\downarrow0$,
\begin{equation}\label{eq:saddle-residual}
 1-\frac1{W_{\alpha,R}(t)}
 +\alpha\sum_{r=1}^{R}A_r'(\alpha W_{\alpha,R}(t))t^r
 =O_R(t^{R+1}),
\end{equation}
and
\begin{equation}\label{eq:saddle-value-residual}
 W_{\alpha,R}(t)-\log W_{\alpha,R}(t)
 +\sum_{r=1}^{R}A_r(\alpha W_{\alpha,R}(t))t^r
 =1+\sum_{r=1}^{R}E_r(\alpha)t^r+O_R(t^{R+1}).
\end{equation}
All coefficient functions $b_r$ and $E_r$ are smooth on $[0,1]$.  Moreover,
for every fixed $r\ge1$,
\begin{equation}\label{eq:small-alpha-coeff}
 b_r(\alpha)=O_r(\alpha),\qquad E_r(\alpha)=O_r(\alpha)
 \qquad(\alpha\downarrow0).
\end{equation}
\end{lemma}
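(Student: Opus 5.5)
The plan is to turn the formal construction in \eqref{eq:saddle-formal}–\eqref{eq:Er-def} into finite Taylor expansions. The ingredients are the smoothness of the $A_r$ on bounded intervals, which Proposition~\ref{prop:block} and \eqref{eq:Ar} give for every fixed derivative order, and $A_r(0)=0$. First I make the coefficients explicit. Substitute $W=1+\sum_r b_r t^r$ into the left side of \eqref{eq:saddle-formal}. Expand $1-1/W$ as a power series in $\sum_r b_rt^r$. Expand $A_r'(\alpha W)=A_r'(\alpha+\alpha(W-1))$ by Taylor's formula around $\alpha$, to order $R$, with derivatives $A_r^{(k+1)}(\alpha)$. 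Collecting the coefficient of $t^r$ gives a triangular recursion
\[
 b_r(\alpha)=-\alpha A_r'(\alpha)+\Phi_r\bigl(\alpha;b_1,\dots,b_{r-1}\bigr).
\]
Here $\Phi_r$ is a polynomial in $b_1,\dots,b_{r-1}$, with coefficients that are polynomials in $\alpha$ and in $A_s^{(k)}(\alpha)$ for $s<r$, $k\le r$. Every term containing an $A$-derivative carries at least one explicit factor $\alpha$. The terms coming from $1-1/W$ are products of at least two $b_s$. By induction each $b_r$ is then a finite combination of smooth functions on $[0,1]$, so it is smooth. The same induction gives $b_r=O_r(\alpha)$: assuming $b_s=O(\alpha)$ for $s<r$, each term in $\Phi_r$ is $O(\alpha)$.

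Second, the residual \eqref{eq:saddle-residual}. Set $u=W_{\alpha,R}(t)-1=\sum_{r\le R}b_rt^r$. Then $|u|\le Ct$ uniformly in $\alpha\in[0,1]$ because the $b_r$ are bounded. For small $t$, $W_{\alpha,R}\in[1/2,3/2]$, so $\alpha W_{\alpha,R}\in[0,3/2]$, a fixed compact interval on which $A_r^{(k)}$ is bounded for $k\le R+2$. I will apply Taylor's theorem with Lagrange remainder of order $R+1$ in $u$ to $1-1/(1+u)$ and to $A_r'(\alpha+\alpha u)$. The remainders are $O(|u|^{R+1-?})\cdot t^r=O(t^{R+1})$, with constants depending only on $R$ and on $\sup_{[0,3/2]}|A_r^{(k)}|$. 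The truncated polynomial in $t$ agrees through order $t^R$ with the formal series, whose coefficients vanish by construction. The surviving terms of degree greater than $R$ form a polynomial in $t$ with bounded coefficients, hence are $O(t^{R+1})$. This gives \eqref{eq:saddle-residual} uniformly in $\alpha$.

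Third, the value expansion \eqref{eq:saddle-value-residual}. I expand $W-\log W=1+u-\log(1+u)$ and $A_r(\alpha+\alpha u)$ in exactly the same way. The coefficients of $t^r$, $r\le R$, are by definition $E_r(\alpha)$ from \eqref{eq:Er-def}. These are explicit polynomials in the $b_s$, $\alpha$ and $A_s^{(k)}(\alpha)$, hence smooth on $[0,1]$, and the remainder is again uniformly $O_R(t^{R+1})$. For \eqref{eq:small-alpha-coeff} for $E_r$ I use the stationarity built into the saddle. Since $\partial_W(W-\log W)=1-1/W$ vanishes at $W=1$, the $\log$-part contributes only quadratic and higher terms in $u$, which are $O(\alpha^2)$ by the bound on the $b_r$. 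The $\mathcal A$-part contributes $A_r(\alpha)$ plus terms carrying factors $\alpha b_s A^{(k)}$. Since $A_r(0)=0$ and $A_r$ is $C^1$, $A_r(\alpha)=O(\alpha)$, so $E_r=O_r(\alpha)$.

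The main obstacle is only bookkeeping: ensuring that the smoothness of $A_r$ up to order $R+2$ on a compact neighbourhood of $[0,1]$ really follows from the uniform derivative bounds in Proposition~\ref{prop:block}. It does, since the series \eqref{eq:Ar} converges with all fixed-order derivatives uniformly on bounded intervals, by \eqref{eq:block-decay} and $a_{r,M}=O((\log M)^{r}/M^2)$. It also requires tracking the explicit $\alpha$ factors in the induction.
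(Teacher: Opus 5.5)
Your proposal is correct and follows essentially the same route as the paper. The paper also builds a triangular recursion for the $b_r$ in which every $A$-term carries an explicit factor $\alpha$, which gives smoothness and $b_r=O_r(\alpha)$ by induction. It also uses uniform Taylor remainders on a fixed compact $z$-interval for both residuals, and derives $E_r=O_r(\alpha)$ from the quadratic vanishing of $W-\log W-1$ together with $A_r(0)=0$. The only blemish is the unfinished exponent in your remainder bound, which should read $O(|u|^{R+1-r})\,t^r$ (or simply $O(|u|^{R+1})$ if you expand every factor to order $R+1$).
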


\begin{proof}
The coefficients $b_r(\alpha)$ are chosen recursively so that the
coefficients of $t,t^2,\ldots$ in \eqref{eq:saddle-formal} vanish.  By
Theorem~\ref{thm:prime-block} and Proposition~\ref{prop:block}, every fixed
number of derivatives of the $A_r$ is bounded on $[0,2]$.  Taylor's theorem
therefore has a remainder uniform in $\alpha\in[0,1]$.  Substituting the
truncated series and keeping terms through degree $R$ gives
\eqref{eq:saddle-residual}.  The same argument applied to
\eqref{eq:Er-def} gives \eqref{eq:saddle-value-residual}.  The recursion uses
only addition, multiplication, and derivatives of the smooth functions
$A_r$, so the coefficient functions are smooth.

We also record the behavior at $\alpha=0$, because it is used when the bulk
and sparse ranges are joined.  Write $W_\alpha(t)=1+B_\alpha(t)$.  The
coefficient of $t^r$ in $1-1/W_\alpha$ is $b_r(\alpha)$ plus a polynomial in
$b_1(\alpha),\ldots,b_{r-1}(\alpha)$ with no constant or linear term.  The
other term in \eqref{eq:saddle-formal} has the explicit prefactor $\alpha$.
Since the derivatives of every fixed $A_j$ are bounded near the origin, an
induction on $r$ gives $b_r(\alpha)=O_r(\alpha)$.  Finally,
$W_\alpha-\log W_\alpha-1=O(B_\alpha^2)$ coefficientwise, while
$A_j(0)=0$ and hence $A_j(\alpha W_\alpha)=O_j(\alpha)$.  Comparing the
coefficient of $t^r$ in \eqref{eq:Er-def} therefore gives
$E_r(\alpha)=O_r(\alpha)$.
\end{proof}

The first three coefficient functions are
\begin{align}
 E_1(\alpha)&=A_1(\alpha),\label{eq:E1}\\
 E_2(\alpha)&=A_2(\alpha)-\frac{\alpha^2}{2}A_1'(\alpha)^2,\label{eq:E2}\\
 E_3(\alpha)&=A_3(\alpha)+\frac{\alpha^3}{3}A_1'(\alpha)^3
 +\frac{\alpha^4}{2}A_1'(\alpha)^2A_1''(\alpha)
 -\alpha^2A_1'(\alpha)A_2'(\alpha).\label{eq:E3}
\end{align}
The estimate $E_r(\alpha)=O_r(\alpha)$ for every fixed $r$ is part of
Lemma~\ref{lem:finite-saddle}.  The diagonal values in \eqref{eq:diag-values} are obtained by evaluating
\eqref{eq:E1}--\eqref{eq:E3} at $\alpha=1$.  A short numerical check is given
at the end of the paper; none of the numerical values is used in the proof.

The coefficient localization must be performed before the small-prime controls are inserted.
From this point onward we enlarge the previously defined residue laws
$\Pp_z^{>}$ and $\Pp_z^{\rm all}$ by adjoining independent quotient variables
$q_j$, each geometric with parameter $e^{-z}$.  Under the enlarged completed
law, the full total $T$ has distribution
$\Pp_z(T=q)=f_m(q)e^{-zq}/G_m(z)$.  Under the enlarged law $\Pp_z^{>}$ put
\begin{equation}\label{eq:Y}
 Y=\sum_{j=1}^{m}q_j+
 \sum_{\substack{j\le m\\j\ne Q_p\ (p\le\sqrt m)}}\frac{R_j}{j}.
\end{equation}
The quotient variables remain independent geometric variables.

\begin{lemma}[Partial mean and variance]\label{lem:mean-var}
For every fixed $R\ge1$, uniformly for $(2m)^{-1}\le z\le z_1$,
\begin{equation}\label{eq:Ymean}
 \E_z^{>}Y=\frac mz-m\sum_{r=1}^{R}\frac{A_r'(z)}{(\log m)^r}
 +o\!\left(\frac{m}{(\log m)^R}\right)+O(h_m+\log m),
\end{equation}
and there are constants $0<c_{z_1}<C_{z_1}<\infty$ such that
\begin{equation}\label{eq:Yvar}
 c_{z_1}\frac{m}{z^2}
 \le \operatorname{Var}_z^{>}(Y)
 \le C_{z_1}\frac{m}{z^2}.
\end{equation}
\end{lemma}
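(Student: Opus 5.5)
The plan is to split $Y$ into three independent pieces under $\Pp_z^{>}$ and treat each separately. The quotient sum $\sum_j q_j$ has each $q_j$ geometric with parameter $e^{-z}$ and is independent of everything else. The second piece is the residues $R_j/j$ for coordinates $j$ not divisible by any prime $p>\sqrt m$ and not controls; these are untouched by the conditioning, so they keep their product laws $\mu_{j,z}$. The third piece is the residues in the large-prime blocks $\{ap:a\le M_p\}$, $\sqrt m<p\le m$. Under $\Pp_z^{>}$ these are conditioned on $E_p$, and distinct blocks are disjoint, so the blocks remain mutually independent and independent of the first two pieces. Hence both the mean and the variance of $Y$ are sums of the corresponding block quantities.

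For the unconditioned part I would use the exact identity
\[
\E_z\!\left[q_j+\frac{R_j}{j}\right]=-\partial_z\log\frac{1}{1-e^{-z/j}}.
\]
This holds because the joint generating function of $q_j+R_j/j$ is $1/(1-e^{-z/j})$; equivalently, the variable $k_j/j$ under the tilt $e^{-zk_j/j}$ has this mean. Summing over all $j\le m$ and applying the expansion $-\log(1-e^{-z/j})=\log j-\log z+z/(2j)+O(j^{-2})$, differentiated in $z$ (legitimate since the functions are analytic and bounded on $[(2m)^{-1},z_1]$), gives total mean $m/z+O(\log m)$. Here $\sum 1/(2j)=O(\log m)$, and the derivative of the $O(j^{-2})$ term must be checked to be summable; for $z\ge(2m)^{-1}$ and $j\le m$ one has $z/j$ bounded, so this is fine. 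Removing the $h_m$ control coordinates, each with mean at most $1$, costs $O(h_m)$.

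Next comes the conditioning correction in each large block. Tilting shows that
\[
\E_z\Bigl[\sum_{a}R_{ap}/(ap)\,\Big|\,E_p\Bigr]=\E_z\Bigl[\sum_{a}R_{ap}/(ap)\Bigr]-\partial_z\log\Pp_z(E_p),
\]
and $\partial_z\log\Pp_z(E_p)=\partial_z\log F_{p,M_p}(z)$. The total correction is therefore $-\sum_{\sqrt m<p\le m}\partial_z\log F_{p,M_p}(z)$, which by Theorem~\ref{thm:prime-block} with $j=1$ equals $-m\sum_{r\le R}A_r'(z)(\log m)^{-r}+o(m/(\log m)^R)$. Combining this with the previous paragraph yields \eqref{eq:Ymean}. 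The main technical point is to check that Theorem~\ref{thm:prime-block} is uniform down to $z\ge(2m)^{-1}$; it is stated on $[0,z_1]$, so this is immediate.

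For the variance, the same additivity reduces everything to second derivatives. The unconditioned part contributes $\partial_z^2\sum_j\log\bigl(1/(1-e^{-z/j})\bigr)=m/z^2+O(\log m)$, up to the small loss from removing controls. The block corrections contribute $\sum_p\partial_z^2\log F_{p,M_p}$, which is $O(m/\log m)$ by Theorem~\ref{thm:prime-block} with $j=2$. The main obstacle is that on the range $z\le z_1$ we have $m/z^2\ge m/z_1^2$, so these errors of order $m/\log m$ and $h_m+\log m$ are $o(m/z^2)$ uniformly. The two-sided bound \eqref{eq:Yvar} follows for $m$ large, and the finitely many small $m$ are absorbed into the constants. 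One further check is the lower bound: the variance of the geometric sum alone is $me^{-z}/(1-e^{-z})^2\asymp m/z^2$ on $(0,z_1]$. Since this piece is independent of the rest, it gives the lower bound outright without any need for asymptotics.
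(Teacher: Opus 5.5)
Your proposal is correct and follows the paper's proof. Exact tilting identities give the mean shift $-\partial_z\log F_{p,M_p}$ and the variance shift $+\partial_z^2\log F_{p,M_p}$ for each independent large-prime block, the control residues cost $O(h_m)$, and the independent geometric quotients give the lower bound. The one minor difference is that you bound $\sum_p\partial_z^2\log F_{p,M_p}$ by Theorem~\ref{thm:prime-block} with $j=2$, which gives $O(m/\log m)$ and in fact $\operatorname{Var}_z^{>}(Y)\sim m/z^2$. The paper instead bounds each block by $O(M_p)$ via Proposition~\ref{prop:block} and absorbs the resulting $O(m)$ into $O(m/z^2)$.
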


\begin{proof}
Before imposing congruences,
\[
 \sum_{j\le m}\E\frac{k_j}{j}
 =\sum_{j\le m}\frac1{j(e^{z/j}-1)}=\frac mz+O(\log m).
\]
The last estimate is uniform for $0<z\le z_1$: it follows from
$1/(e^u-1)=u^{-1}-1/2+O_{z_1}(u)$ with $u=z/j$.  Thus allowing
$z$ to decrease from $1/m$ to $1/(2m)$ introduces no new error.
Conditioning a large-prime block changes its mean by
$-\partial_z\log F_{p,M_p}(z)$.  Deleting the $h_m$ control residues changes the mean by
$O(h_m)$, so \eqref{eq:Ymean} follows from Theorem~\ref{thm:prime-block}.

The quotient variables are independent geometric variables with parameter
$e^{-z}$.  Hence, uniformly for $0<z\le z_1$,
\[
 \operatorname{Var}(q_j)=\frac{e^{-z}}{(1-e^{-z})^2}\asymp_{z_1}z^{-2}.
\]
They are independent of all residue conditions.  Their contribution alone
therefore gives the lower bound in \eqref{eq:Yvar}, and their total variance
is $O(mz^{-2})$.  It remains to bound the residue variance, which can be read
directly from the block normalizing factors.  For one large-prime block put
\[
 U_p=\sum_{a\le M_p}\frac{R_{ap}}{ap}.
\]
Before conditioning, the variables in this sum are independent and each lies
in $[0,1)$, so $\operatorname{Var}(U_p)=O(M_p)$.  The logarithm of the
normalizing factor after conditioning on $E_p$ differs from the unconditioned
one by $\log F_{p,M_p}(z)$.  Differentiating twice therefore gives the exact
identity
\[
 \operatorname{Var}_z(U_p\mid E_p)
 =\sum_{a\le M_p}\operatorname{Var}_z\!\left(\frac{R_{ap}}{ap}\right)
 +\partial_z^2\log F_{p,M_p}(z).
\]
Proposition~\ref{prop:block}, with $J=2$, shows that the last term is
$O(M_p)$ after enlarging the constant for the finitely many small block
sizes.  Distinct large-prime blocks are disjoint, and all residue coordinates
outside those blocks remain independent.  Since
$\sum_{\sqrt m<p\le m}M_p\le m$, the total residue variance is $O(m)$.
Since $z\le z_1$, this $O(m)$ residue contribution is absorbed by
$O(mz^{-2})$.  Together with the quotient variables this proves both bounds
in \eqref{eq:Yvar}.
\end{proof}

\begin{lemma}[From a probability window to one coefficient]\label{lem:coefficient-window}
For every integer $q\ge0$,
\[
 f_m(q+1)\ge f_m(q).
\]
Moreover, under the completed tilted law described above, if $W\ge0$ and
\[
 \Pp_z(n-W\le T\le n)\ge e^{-H},
\]
then
\begin{equation}\label{eq:coefficient-window}
 \log f_m(n)\ge\log G_m(z)+zn-zW-H-\log(W+1).
\end{equation}
\end{lemma}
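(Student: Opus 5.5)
Monotonicity comes first, via an injection. Since $j=1$ is allowed, adding one to $k_1$ maps every solution counted by $f_m(q)$ to a distinct solution counted by $f_m(q+1)$, so $f_m(q+1)\ge f_m(q)$. Equivalently, the generating function $\sum_q f_m(q)x^q$ equals $(1-x)^{-1}$ times the generating function of the other parts, whose coefficients are nonnegative.

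For \eqref{eq:coefficient-window}, we work with the completed tilted law, under which
\[
 \Pp_z(T=q)=\frac{f_m(q)e^{-zq}}{G_m(z)}.
\]
This is just the distribution of the total once the quotient and residue variables are conditioned on integrality. We then bound the window probability from above term by term. For integers $q$ with $n-W\le q\le n$, monotonicity gives $f_m(q)\le f_m(n)$. Since $z\ge0$ and $q\ge n-W$, we also have $e^{-zq}\le e^{-z(n-W)}$. The window contains at most $\lfloor W\rfloor+1\le W+1$ integers, so
\[
 e^{-H}\le \Pp_z(n-W\le T\le n)\le (W+1)\,\frac{f_m(n)e^{-z(n-W)}}{G_m(z)}.
\]
Taking logarithms and rearranging gives
\[
 \log f_m(n)\ge \log G_m(z)+zn-zW-H-\log(W+1),
\]
which is \eqref{eq:coefficient-window}.

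The main point needing care is identifying the law of $T$ under the completed measure with the normalized coefficients $f_m(q)e^{-zq}/G_m(z)$. This follows from \eqref{eq:G-factor}: the quotient variables are independent geometric variables, and conditioning the residues on the full congruence restricts the product measure to integral totals. The normalization is then $P_m(z)\prod_j(1-e^{-z/j})^{-1}=G_m(z)$. Beyond this identification the argument is elementary. The case where $n-W<0$ is harmless, because $T\ge0$.
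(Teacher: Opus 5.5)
Your proposal is correct and follows essentially the same route as the paper. The paper uses the same injection $k_1\mapsto k_1+1$ for monotonicity, and the same termwise bound $f_m(q)\le f_m(n)$, $e^{-zq}\le e^{-z(n-W)}$ over at most $W+1$ integers in the window, followed by rearrangement. Your extra remarks identifying the law of $T$ and handling the case $n-W<0$ are accurate but not needed beyond what the paper states.
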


\begin{proof}
Adding one to $k_1$ sends every representation of $q$ to a distinct
representation of $q+1$, proving monotonicity.  In the interval
$n-W\le q\le n$ we have $f_m(q)\le f_m(n)$ and
$e^{-zq}\le e^{-z(n-W)}$.  Hence
\[
 e^{-H}\le\Pp_z(n-W\le T\le n)
 \le\frac{(W+1)f_m(n)e^{-z(n-W)}}{G_m(z)}.
\]
Rearranging gives \eqref{eq:coefficient-window}.
\end{proof}

\begin{lemma}[Transferring a window through the controls]\label{lem:window-transfer}
Let $A$ be an event determined by the noncontrol coordinates, and suppose
that on $A$ the partial total $Y$ lies in an interval $[u,v]$.  After the
small-prime controls are completed, let $T$ be the full integral total.  Then
$T-Y\in[0,h_m)$ and, uniformly for $0\le z\le z_1$,
\begin{equation}\label{eq:window-transfer}
 \Pp_z^{\rm all}(A)\ge e^{-C_{z_1}h_m}\Pp_z^{>}(A).
\end{equation}
Consequently, if $Y\in[n-h_m-2w,n-h_m]$ on $A$, then
$T\in[n-h_m-2w,n]$ on $A$ and
\[
 \Pp_z^{\rm all}(n-h_m-2w\le T\le n)
 \ge e^{-C_{z_1}h_m}\Pp_z^{>}(A).
\]
\end{lemma}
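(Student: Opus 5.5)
The lemma has three parts: the bound $T-Y\in[0,h_m)$, the measure comparison \eqref{eq:window-transfer}, and the window consequence. I would prove them in that order, using Lemma~\ref{lem:controls} throughout.

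\emph{Step 1: the bound on $T-Y$.} The full total is $T=\sum_j q_j+\sum_j R_j/j$, and $Y$ in \eqref{eq:Y} omits only the control residues. Hence
\[
 T-Y=\sum_{p\le\sqrt m}\frac{R_{Q_p}}{Q_p}.
\]
Each summand lies in $[0,1)$ because $0\le R_{Q_p}<Q_p$. There are exactly $h_m=\pi(\sqrt m)$ such summands. So $0\le T-Y<h_m$ deterministically, on every configuration and not just on $A$.

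\emph{Step 2: the comparison of measures.} I would work on the enlarged laws, where the quotient variables $q_j$ are adjoined independently. These variables enter neither the congruences nor the control completion. So the Radon--Nikodym derivative of the enlarged $\Pp_z^{\rm all}$ with respect to the enlarged $\Pp_z^{>}$, both restricted to the noncontrol coordinates and the $q_j$, equals the residue derivative in \eqref{eq:RN-controls}. That derivative is at least $e^{-C_{z_1}h_m}$. Integrating it over $A$, which is measurable with respect to these coordinates, gives \eqref{eq:window-transfer}.

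The one point needing care is that under $\Pp_z^{\rm all}$ the control vector is a deterministic function $r^*(X)$ of the noncontrol configuration $X$. This is the uniqueness statement in Lemma~\ref{lem:controls}. It means that projecting $\Pp_z^{\rm all}$ onto the noncontrol coordinates loses no information about $T$, so $T$ is a function of $(X,(q_j))$. Events defined through $T$ can therefore be handled on the projected space.

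\emph{Step 3: the window consequence.} Suppose $Y\in[n-h_m-2w,\,n-h_m]$ on $A$. Step 1 gives
\[
 T\in[n-h_m-2w,\;n-h_m+h_m)\subset[n-h_m-2w,\,n].
\]
Hence $A\subset\{n-h_m-2w\le T\le n\}$ up to null sets. Monotonicity of $\Pp_z^{\rm all}$ together with \eqref{eq:window-transfer} then gives the final inequality.

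I expect no genuine obstacle here. The only step needing attention is the measure-theoretic bookkeeping in Step 2: checking that the constant in \eqref{eq:RN-controls} is unaffected by the quotient variables, and that $A$ is measurable for the projected laws.
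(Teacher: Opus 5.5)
Your proposal is correct and follows the paper's proof. The paper argues in the same way: first the deterministic bound $0\le T-Y<h_m$, coming from the $h_m$ control summands $R_{Q_p}/Q_p\in[0,1)$, and then the lower Radon--Nikodym bound in \eqref{eq:RN-controls} integrated over $A$. Your extra remark is worth keeping: the adjoined quotient variables do not enter the congruences, so the Radon--Nikodym derivative on the enlarged space equals the residue one. The paper leaves this implicit, and it is needed because the event $A_m$ used later in Proposition~\ref{prop:bulk} depends on the $q_j$.
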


\begin{proof}
The completed control vector is a deterministic function of the noncontrol
coordinates.  Each control contributes $R_{Q_p}/Q_p\in[0,1)$, and there are
$h_m$ controls, so $0\le T-Y<h_m$.  The probability comparison is exactly
the lower Radon--Nikodym bound in \eqref{eq:RN-controls}, integrated over
$A$.
\end{proof}

\begin{proposition}[Bulk coefficient extraction]\label{prop:bulk}
Fix $R\ge1$.  Uniformly for $m\le n\le m^2$,
\begin{equation}\label{eq:bulk}
 \log s(n,m)=m\log n-\psi(m)
 +m\sum_{r=1}^{R}\frac{E_r(m/n)}{(\log m)^r}
 +o\!\left(\frac{m}{(\log m)^R}\right).
\end{equation}
\end{proposition}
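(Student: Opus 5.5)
Since $s(n,m)=f_m(n)$, the plan is to prove matching upper and lower bounds for $\log f_m(n)$ at a tilt $z$ chosen from the finite-order saddle of Lemma~\ref{lem:finite-saddle}. Put $\alpha=m/n\in[m^{-1},1]$ and $t=1/\log m$, and take
\[
 z=z_\ast=\alpha\,W_{\alpha,R}(t).
\]
Then $z_\ast\in[(2m)^{-1},z_1]$ for a fixed $z_1$ (say $z_1=2$) once $m$ is large. This places us in the range where \eqref{eq:G-exp} and Lemma~\ref{lem:mean-var} apply.

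\textbf{Upper bound.} By the trivial Chernoff inequality $f_m(n)\le G_m(z)e^{zn}$ for every $z>0$, we get $\log f_m(n)\le \log G_m(z_\ast)+z_\ast n$. Insert \eqref{eq:G-exp} and use $\log m!=m\log m-m+O(\log m)$. Since $\log(m/z_\ast)=\log n-\log W_{\alpha,R}$ and $z_\ast n=mW_{\alpha,R}$, the right side becomes
\[
 m\log n-\psi(m)+m\Bigl(W-\log W-1+\textstyle\sum_r A_r(\alpha W)t^r\Bigr)+o(mt^R),
\]
where $W=W_{\alpha,R}(t)$. Now apply \eqref{eq:saddle-value-residual}: the bracket equals $\sum_{r\le R}E_r(\alpha)t^r+O(t^{R+1})$. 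Running the argument with $R+1$ in place of $R$ makes the error $o(mt^R)$ uniformly. The $O(\log m)$ errors are also $o(m t^R)$.

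\textbf{Lower bound.} Apply Lemma~\ref{lem:coefficient-window} with window width $W'=h_m+2w$, where $w=C\sqrt m/z_\ast$ is a few standard deviations. We need $\Pp^{\rm all}_{z}(n-W'\le T\le n)\ge e^{-H}$ with $H=o(mt^R)$. By Lemma~\ref{lem:window-transfer}, it suffices to show that $\Pp^{>}_z(Y\in[n-h_m-2w,n-h_m])\ge e^{-o(mt^R)}$. For this, choose the tilt not at $z_\ast$ itself but at a slightly shifted $z$ so that the mean satisfies $\E^{>}_zY=n-h_m-w$. By \eqref{eq:Ymean} and \eqref{eq:saddle-residual}, the mean at $z_\ast$ is $n+o(mt^R/\alpha)+O(h_m+\log m)$, and $\partial_z\E Y\asymp -m/z^2$. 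So a shift $\delta z$ with $|\delta z|/z=o(t^R)+O((h_m+\log m)/m)$ suffices, and continuity of the mean in $z$ gives an exact hit. By Chebyshev and \eqref{eq:Yvar}, with $C$ large, $Y$ then lands in the window with probability at least $1/2$.

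The cost of using this $z$ instead of $z_\ast$ is controlled as follows. The function $\phi(z)=\log G_m(z)+zn$ is stationary at $z_\ast$ up to first-order error $o(mt^R/\alpha)\cdot$, and its second derivative is $\asymp m/z^2$. The loss is therefore $m(\delta z/z)^2+\bigl|\phi'(z_\ast)\bigr|\,|\delta z|=o(mt^R)$. Equivalently, one can simply plug the shifted $z$ into \eqref{eq:G-exp} directly, since that formula is valid there as well.

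The remaining terms are all $o(mt^R)$: $zW'\lesssim \alpha(h_m+\sqrt m/\alpha)=O(\sqrt m)$, $\log(W'+1)=O(\log m)$, $C_{z_1}h_m=O(\sqrt m/\log m)$, and $H=\log 2$. Combining with the upper bound gives \eqref{eq:bulk}.

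The main obstacle is uniformity at the small-$\alpha$ end, $n$ near $m^2$, where $z\approx 1/m$. There one must check three things: that the variance $m/z^2\approx m^3$ still gives a window $w\lesssim m^{3/2}$ whose cost $zw\lesssim\sqrt m$ is negligible; that the mean error $O(h_m+\log m)$ is small relative to $\sqrt{\operatorname{Var}}$; and that the Taylor remainders from Lemma~\ref{lem:finite-saddle}, multiplied by $1/\alpha$ through the mean, do not blow up. The last point is handled by $b_r(\alpha)=O(\alpha)$ from \eqref{eq:small-alpha-coeff}, which keeps the required relative shift $\delta z/z$ bounded by $o(t^R)$ uniformly.
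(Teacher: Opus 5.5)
Your proposal is correct and follows the paper's proof essentially step for step. The steps are: the Chernoff upper bound at $z_\ast=\alpha W_{\alpha,R}(1/\log m)$, evaluated via \eqref{eq:G-exp} and \eqref{eq:saddle-value-residual}; a slightly shifted tilt with $\E^{>}_zY=n-h_m-w$; Chebyshev; and Lemmas~\ref{lem:window-transfer} and~\ref{lem:coefficient-window}. The only difference is cosmetic: your window $w\asymp\sqrt m/z_\ast$ (probability $\ge 1/2$) replaces the paper's $m^{2/3}/\alpha$ (probability $1-o(1)$).

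You offer two ways of comparing the exponent at the shifted tilt with its value at $z_\ast$; rely on the second. Plugging the shifted $z$ directly into \eqref{eq:G-exp} works, because the main terms $-m\log z+zn$ move by only $O(m|\delta z|/z)=o(m/(\log m)^R)$. The first way needs the mean and variance of the completed total $T$, whereas Lemma~\ref{lem:mean-var} only controls $Y$ under $\Pp^{>}_z$.
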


\begin{proof}
Set $\alpha=m/n\in[m^{-1},1]$, $L=\log m$, and let $W_{\alpha,R}$ be the truncation of
$W_\alpha$ through order $R$.  Put
$z_0=\alpha W_{\alpha,R}(1/L)$.  To see explicitly why this centers the
tilt, substitute $z=\alpha W$ into \eqref{eq:Ymean}.  Up to the stated
remainder there,
\[
 \E_z^{>}Y-n
 =\frac{m}{\alpha}
 \left\{\frac1W-1
 -\alpha\sum_{r=1}^{R}\frac{A_r'(\alpha W)}{L^r}\right\}.
\]
The expression in braces is the negative of the left side of
\eqref{eq:saddle-residual}, with $t=1/L$.  Lemma~\ref{lem:finite-saddle} and
Lemma~\ref{lem:mean-var} therefore give
\[
 \E_{z_0}^{>}Y=n+o\!\left(\frac{m}{\alpha L^R}\right).
\]
Let $w=m^{2/3}/\alpha$.  Since $W_{\alpha,R}(1/L)=1+O_R(L^{-1})$
uniformly for $0\le\alpha\le1$, we have $z_0/\alpha=1+O_R(L^{-1})$.
Hence, for every fixed $\varepsilon>0$ and all sufficiently large $m$, every
$z$ between $z_0-\varepsilon\alpha L^{-R}$ and
$z_0+\varepsilon\alpha L^{-R}$ satisfies, say,
\[
 \frac12\alpha\le z\le2\alpha.
\]
Apply \eqref{eq:G-exp} and Lemma~\ref{lem:mean-var} with the fixed
choice $z_1=2$.  In particular this whole interval lies in their range
$(2m)^{-1}\le z\le2$, and the lower bound in \eqref{eq:Yvar} is uniformly
$\gg m/\alpha^2$ throughout it.  Since
$\dd\E_z^{>}Y/\dd z=-\operatorname{Var}_z^{>}(Y)$, integration over an
interval of length $\varepsilon\alpha L^{-R}$ changes the mean by at least
a constant multiple of $\varepsilon m/(\alpha L^R)$.  As
$h_m+w=o(m/(\alpha L^R))$, the two values
$z_0\pm\varepsilon\alpha L^{-R}$ therefore bracket $n-h_m-w$ for every
fixed $\varepsilon>0$ and all large $m$.  Since
$\dd\E_z^{>}Y/\dd z=-\operatorname{Var}_z^{>}(Y)<0$, there is
\[
 z_m=z_0+o(\alpha L^{-R})
\]
with $\E_{z_m}^{>}Y=n-h_m-w$.  By \eqref{eq:Yvar} and Chebyshev,
\[
 \Pp_{z_m}^{>}(n-h_m-2w\le Y\le n-h_m)=1-o(1).
\]
Let
\[
 A_m=\{n-h_m-2w\le Y\le n-h_m\}.
\]
The preceding Chebyshev estimate says $\Pp_{z_m}^{>}(A_m)=1-o(1)$.
Apply Lemma~\ref{lem:window-transfer}.  On $A_m$ the completed total lies in
$[n-h_m-2w,n]$, and therefore, with $W_m=h_m+2w$,
\begin{equation}\label{eq:window}
 \Pp_{z_m}(n-W_m\le T\le n)
 \ge e^{-C h_m}(1-o(1))
 =\exp\{-O(h_m)\}.
\end{equation}
This is the only loss caused by inserting the small-prime controls.

For every $z>0$, the $n$th term of $G_m$ gives
$s(n,m)e^{-zn}\le G_m(z)$.  Using \eqref{eq:G-exp} at $z_0$ gives the required upper
bound.  For the lower bound, apply Lemma~\ref{lem:coefficient-window} to
\eqref{eq:window}.  Since its probability loss is $H=O(h_m)$, we get
\[
 \log s(n,m)\ge \log G_m(z_m)+z_mn-z_mW_m-O(h_m+\log(W_m+1)).
\]
Here $z_mW_m=O(m^{2/3}+h_m)=o(m/L^R)$.  It remains only to justify replacing
$z_m$ by $z_0$ in the saddle exponent.  For
$\Phi_m(z)=\log G_m(z)+zn$ one has
$\Phi_m'(z)=n-\E_zT$, and for $z\asymp\alpha$ the elementary product model gives
$|\Phi_m'(z)|\ll n$.  Since
$z_m-z_0=o(\alpha L^{-R})$ and $n=m/\alpha$, the mean-value theorem yields
\[
 \Phi_m(z_m)-\Phi_m(z_0)=o(m/L^R).
\]
It remains to evaluate the saddle exponent explicitly.  Write
$W=W_{\alpha,R}(1/L)$, so that $z_0=\alpha W$ and $n=m/\alpha$.  From
\eqref{eq:G-exp} and Stirling's formula,
\[
\begin{aligned}
 \log G_m(z_0)+z_0n
 &=m\log n-\psi(m)
   +m\left(W-\log W-1
   +\sum_{r=1}^{R}\frac{A_r(\alpha W)}{L^r}\right)
   +o(m/L^R).
\end{aligned}
\]
The finite-order identity \eqref{eq:saddle-value-residual} makes the
parenthesis equal to $\sum_{r=1}^{R}E_r(\alpha)L^{-r}+O_R(L^{-R-1})$.
Together with the upper and lower bounds above, this proves \eqref{eq:bulk}.
\end{proof}

For the complementary range no saddle is needed.

\begin{proposition}[Far-sparse range]\label{prop:sparse}
Uniformly for $n>m^2$,
\begin{equation}\label{eq:sparse-log}
 \log s(n,m)=m\log n-\psi(m)+\log\frac mn+O\!\left(\frac{m^2}{n}\right).
\end{equation}
\end{proposition}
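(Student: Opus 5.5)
We use the exact binomial decomposition of Proposition~\ref{prop:binomial}. Since $0\le h\le m-1<n$, every binomial coefficient in \eqref{eq:binomial-mixture} is a genuine polynomial value. We compare each term with the $h=0$ reference value $\binom{n+m-1}{m-1}$ and then sum using the total mass \eqref{eq:cm-total}.

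\emph{Step 1: the binomial ratio.} For $0\le h<m$ we write
\[
 \frac{\binom{n-h+m-1}{m-1}}{\binom{n+m-1}{m-1}}
 =\prod_{i=1}^{h}\frac{n-i+1}{n+m-i}.
\]
Each factor equals $1-(m-1)/(n+m-i)$, and it is $\ge 1-m/n$. Since $h<m$ and $m/n<1/m$, the logarithm of the product lies between $-2m^2/n$ and $0$. Hence
\[
 s(n,m)=\binom{n+m-1}{m-1}\Bigl(\sum_h c_m(h)\Bigr)e^{O(m^2/n)}
 =\binom{n+m-1}{m-1}\frac{m!}{L_m}\,e^{O(m^2/n)}.
\]
The estimate is uniform in $h$, so no information about the distribution of $c_m(h)$ is needed. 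This is why we use the total count rather than any saddle.

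\emph{Step 2: evaluating the reference binomial.} We have
\[
 \binom{n+m-1}{m-1}\,m!
 =m\prod_{i=1}^{m-1}(n+i)
 =m\,n^{m-1}\prod_{i=1}^{m-1}\Bigl(1+\frac in\Bigr).
\]
The last product is $\exp\{O(\sum_i i/n)\}=e^{O(m^2/n)}$. Therefore
\[
 s(n,m)=\frac{m}{L_m}\,n^{m-1}e^{O(m^2/n)},
\]
and taking logarithms gives $m\log n-\psi(m)+\log m-\log n+O(m^2/n)$, which is \eqref{eq:sparse-log}.

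\emph{Main obstacle.} There is essentially none, because the crude bound $h<m$ already suffices when $n>m^2$. The only care needed is to keep the constants in the $O(m^2/n)$ terms absolute, so that the estimate is uniform in both $n$ and $m$ across the whole range. The refined multiplicative corrections of Theorem~\ref{thm:transition} will later require the distribution of $c_m(h)/\sum_h c_m(h)$, specifically the mean of $h$ under the uniform residue law. That is not needed here, since this statement only asks for an $O(m^2/n)$ error.
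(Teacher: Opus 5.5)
Your proposal is correct and follows essentially the same route as the paper. Both arguments use the exact binomial mixture of Proposition~\ref{prop:binomial}, a uniform-in-$h$ estimate $e^{O(m^2/n)}$ for the binomial coefficients when $h<m<\sqrt n$, and the total mass $m!/L_m$. The only cosmetic difference is that you pass through the $h=0$ reference binomial in two steps, whereas the paper expands $\binom{n-h+m-1}{m-1}=\frac{n^{m-1}}{(m-1)!}\prod_{r=1}^{m-1}\bigl(1+\frac{r-h}{n}\bigr)$ directly.
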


\begin{proof}
From Proposition~\ref{prop:binomial}, uniformly for $0\le h<m$,
\[
 \binom{n-h+m-1}{m-1}
 =\frac{n^{m-1}}{(m-1)!}
 \prod_{r=1}^{m-1}\left(1+\frac{r-h}{n}\right)
 =\frac{n^{m-1}}{(m-1)!}\exp\!\left\{O\!\left(\frac{m^2}{n}\right)\right\}.
\]
Using \eqref{eq:cm-total} gives
\[
 s(n,m)=\frac{m}{L_m}n^{m-1}
 \exp\!\left\{O\!\left(\frac{m^2}{n}\right)\right\},
\]
which is \eqref{eq:sparse-log}.
\end{proof}

The two estimates have a wide enough overlap for the global error scale.
Indeed, when $n\ge m^2$ one has $\alpha=m/n\le1/m$ and
$E_r(\alpha)=O_r(\alpha)$, so
\[
 m\frac{E_r(\alpha)}{(\log m)^r}=O_r((\log m)^{-r}).
\]
At the same time the far-sparse error is $O(m^2/n)=O(1)$.  Both quantities
are $o(m/(\log m)^R)$ for every fixed $R$.  Thus no transition region is
lost when the two arguments are joined.

\begin{proof}[Proof of Theorem~\ref{thm:main}]
For $m\le n\le m^2$, Proposition~\ref{prop:bulk} applies; in this range
$|\log(m/n)|\le\log m=o(m/(\log m)^R)$, so the explicit logarithmic term may be inserted.
For $n>m^2$, Proposition~\ref{prop:sparse} applies.  Since $\alpha=m/n<1/m$ and
$E_r(\alpha)=O_r(\alpha)$,
\[
 m\frac{E_r(\alpha)}{(\log m)^r}=O_r((\log m)^{-r}),
\]
while $m^2/n\le1=o(m/(\log m)^R)$.  The two ranges give \eqref{eq:main-psi} uniformly.
Equation \eqref{eq:main-simple} follows from the standard estimate for $\psi(m)$.
\end{proof}

\section{Sparse asymptotics}\label{sec:transition}

The main theorem is logarithmic and therefore does not by itself show exactly when the
leading term $(m/L_m)n^{m-1}$ becomes multiplicatively accurate as $m$ grows.  The exact
binomial identity from Section~\ref{sec:residue} contains finer information.  We use it
here to identify the full hierarchy of multiplicative correction scales.

Let $H$ be the residue sum under the uniform law on admissible residue vectors:
\[
 \Pp(H=h)=\frac{c_m(h)}{m!/L_m}.
\]
Then Proposition~\ref{prop:binomial} gives
\begin{equation}\label{eq:ratio-expectation}
 \frac{s(n,m)}{(m/L_m)n^{m-1}}
 =\E\prod_{r=1}^{m-1}\left(1+\frac{r-H}{n}\right).
\end{equation}
The following cumulant estimate is the only additional input.

\begin{lemma}[Derivative of the control correction at the origin]\label{lem:control-derivative}
Let
\[
 \mathcal R_m(z)=\log P_m(z)+\psi(m)
 -\sum_{\sqrt m<p\le m}\log F_{p,M_p}(z).
\]
Then
\[
 \mathcal R_m(0)=0,\qquad \mathcal R_m'(0)=O(h_m).
\]
The implied constant is absolute.
\end{lemma}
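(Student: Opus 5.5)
We first show $\mathcal R_m(0)=0$.  By Proposition~\ref{prop:binomial}, $P_m(0)=L_m^{-1}$, so $\log P_m(0)+\psi(m)=0$.  Each large-prime block satisfies $F_{p,M}(0)=1$: at $z=0$ the residues are uniform, so $R_{ap}\bmod p$ is uniform for every $a\le M$, and hence $\Pp_0(E_p)=1/p$.  This gives the first claim.

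For the derivative we write $\mathcal R_m$ probabilistically.  The large-prime events $E_q$ with $q>\sqrt m$ are independent, so
\[
 P_m(z)=\Bigl(\prod_{\sqrt m<q\le m}\Pp_z(E_q)\Bigr)\,
 \Pp_z\bigl(\text{full congruence}\mid E_q\ \forall q>\sqrt m\bigr).
\]
Since $\prod_q q\cdot\prod_{p\le\sqrt m}Q_p=L_m$, we get
\[
 \mathcal R_m(z)=\log\Bigl(\prod_{p\le\sqrt m}Q_p\Bigr)
 +\log\E_z^{>}W_z(X),
\]
with $W_z(X)=\prod_{p\le\sqrt m}\mu_{Q_p,z}(r^*_{Q_p}(X))$ as in the proof of Lemma~\ref{lem:controls}.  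Equivalently,
\[
 \mathcal R_m(z)=\log\E_z^{>}\widetilde W_z,\qquad
 \widetilde W_z=\prod_{p\le\sqrt m}Q_p\,\mu_{Q_p,z}(r^*_{Q_p}).
\]

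Differentiating at $z=0$ gives two contributions.  The first comes from the explicit $z$-dependence of $\widetilde W_z$.  For each factor,
\[
 \partial_z\log\bigl(Q\mu_{Q,z}(r)\bigr)\big|_{z=0}=-\frac rQ+\frac{Q-1}{2Q},
\]
which lies in $[-1/2,1/2]$.  Since $\widetilde W_0=1$, this contribution is at most $h_m/2$ in absolute value.

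The second contribution comes from the $z$-dependence of the law $\Pp_z^{>}$, evaluated on the function $\widetilde W_0\equiv1$.  Because $\widetilde W_0$ is constant and $\Pp_z^{>}$ is a probability law for every $z$, this derivative vanishes.  Hence $|\mathcal R_m'(0)|\le h_m/2$, with an absolute constant.

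The main obstacle is justifying differentiation at the endpoint $z=0$.  The weights $\mu_{j,z}$ are smooth in $z$ (real analytic near $0$) for every $j$, and only finitely many configurations occur for fixed $m$, so $\E_z^{>}\widetilde W_z$ is a finite sum of smooth functions.  Interchanging $\partial_z$ with the expectation is therefore legitimate.  We also need the right-derivative convention at $0$, or extension of the formulas to small negative $z$, which is harmless for the same reason.
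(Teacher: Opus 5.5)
Your proposal is correct and follows essentially the same route as the paper. You write $\mathcal R_m(z)=\log\E_z^{>}\widetilde W_z$ via the control factorization, observe that the derivative of the law acting on the constant $\widetilde W_0\equiv1$ vanishes, and bound each control's logarithmic derivative $\tfrac12-\tfrac{r+1/2}{Q}$ by $\tfrac12$ to get $|\mathcal R_m'(0)|\le h_m/2$. You also spell out the derivation of the factorization from the independence of the $E_q$ and the splitting of $\psi(m)$, and the justification for differentiating at $z=0$; the paper leaves both implicit.
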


\begin{proof}
Condition first on all large-prime events and write $X$ for the remaining
noncontrol coordinates.  If $r^*_{Q_p}(X)$ is the unique completing control,
set
\[
 g_{Q,z}(r)=Q\mu_{Q,z}(r)
 =Q\frac{1-e^{-z/Q}}{1-e^{-z}}e^{-zr/Q},
 \qquad
 \widetilde W_z(X)=\prod_{p\le\sqrt m}
 g_{Q_p,z}(r^*_{Q_p}(X)).
\]
The factorization in Lemma~\ref{lem:controls} gives
\[
 \mathcal R_m(z)=\log \E_z^{>}\widetilde W_z(X).
\]
At $z=0$, $g_{Q,0}(r)=1$ for every $Q,r$, hence
$\widetilde W_0\equiv1$ and $\mathcal R_m(0)=0$.  When differentiating the
expectation at $0$, the derivative of the probability measure contributes
zero because it is applied to the constant function $1$.  Thus
\[
 \mathcal R_m'(0)=\E_0^{>}\!\left[
 \partial_z\widetilde W_z(X)\big|_{z=0}\right].
\]
A direct expansion gives
\[
 \partial_z\log g_{Q,z}(r)\big|_{z=0}
 =\frac12-\frac{r+1/2}{Q},
\]
whose absolute value is at most $1/2$.  Since there are $h_m$ controls,
$|\partial_z\widetilde W_z|_{z=0}|\le h_m/2$.
\end{proof}

\begin{lemma}[Small-tilt residue cumulant]\label{lem:cumulant}
Fix an integer $R\ge1$ and $C>0$.  Define
\[
 K_m(z)=\log\E e^{z(m/2-H)}.
\]
Uniformly for $0\le z\le C(\log m)^R/m$,
\begin{equation}\label{eq:Kexp}
 K_m(z)=zm\sum_{r=1}^{R}\frac{\kappa_r}{(\log m)^r}+o(1).
\end{equation}
\end{lemma}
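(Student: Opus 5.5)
The plan is to rewrite $K_m$ through the tilted residue probability $P_m(z)$, so that Theorem~\ref{thm:prime-block}-type block information applies. Then I would Taylor expand everything at $z=0$. Because $z\le C(\log m)^R/m$ is tiny, only first derivatives at the origin survive, and second-order terms are $o(1)$.

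\emph{Step 1 (exact identity).} By definition of $P_m$ and Proposition~\ref{prop:binomial},
\[
 P_m(z)\prod_{j\le m}S_j(z)=\sum_h c_m(h)e^{-zh}=\frac{m!}{L_m}\E e^{-zH},
\]
since $\prod_j S_j(0)=m!$. Hence
\[
 K_m(z)=\frac{zm}2+\log P_m(z)+\psi(m)+\sum_{j\le m}\log\frac{S_j(z)}{j}.
\]
Expanding $\log(S_j(z)/j)=-z/2+z/(2j)+O(z^2)$ uniformly in $j$, the last sum is $-zm/2+O(z\log m+z^2m)$. In our range $z\log m+z^2m=O((\log m)^{R+1}/m+(\log m)^{2R}/m)=o(1)$. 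So $K_m(z)=\log P_m(z)+\psi(m)+o(1)$.

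\emph{Step 2 (splitting off the controls).} By Lemma~\ref{lem:control-derivative},
\[
 \log P_m(z)+\psi(m)=\sum_{\sqrt m<p\le m}\log F_{p,M_p}(z)+\mathcal R_m(z),
\]
with $\mathcal R_m(0)=0$ and $\mathcal R_m'(0)=O(h_m)$. Then $\mathcal R_m(z)=O(zh_m)+O(z^2\sup_{[0,z]}|\mathcal R_m''|)$, and $zh_m=O(\sqrt m(\log m)^R/m)=o(1)$. I expect the main obstacle to be the second-derivative bound for $\mathcal R_m$ near $0$. I would handle it by differentiating $\log\E_z^{>}\widetilde W_z(X)$ twice, exactly as in that lemma. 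The terms are $\partial_z^2$ and $(\partial_z)^2$ of $\log g_{Q,z}$, each $O(1)$ per control; the other contribution is the covariance coming from differentiating the measure $\Pp_z^{>}$, bounded by the variance of the partial total, $O(m^2)$ crudely. Even this crude polynomial bound $O(h_m^2+m^2)$ would be too weak. The term I actually need is $z\cdot\mathrm{Cov}_z^{>}(Y,\log\widetilde W)$, which is at most $z\sqrt{\operatorname{Var}Y}\cdot O(h_m)$. With the variance of $Y$ under the residue part being $O(m)$ (as in Lemma~\ref{lem:mean-var}), this gives $z^2\cdot O(\sqrt m\,h_m+h_m^2)=o(1)$.

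\emph{Step 3 (large-prime blocks).} Write $\log F_{p,M}(z)=z\,\partial_z\log F_{p,M}(0)+O(z^2\sup|\partial_z^2\log F_{p,M}|)$. By \eqref{eq:block-decay} with $J=2$, $\sum_p\sup|\partial_z^2\log F_{p,M_p}|=O(\sum_p M_p)=O(m)$, so the second-order terms total $O(z^2m)=o(1)$. For the first-order terms I would split by block size.

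For $M=1$ the exact formula gives $\partial_z\log F_{p,1}(0)=\tfrac12-\tfrac1{2p}$, so these primes contribute $\tfrac z2(\pi(m)-\pi(m/2))+o(1)$. The de la Vall\'ee-Poussin form of the prime number theorem, as in Theorem~\ref{thm:prime-block}, gives
\[
 \pi(m)-\pi(m/2)=m\sum_{r\le R}\frac{P_r(0)-\tfrac12P_r(\log2)}{(\log m)^r}+O\!\left(\frac m{(\log m)^{R+1}}\right),
\]
and $\tfrac12(P_r(0)-\tfrac12P_r(\log 2))=\kappa_r$ by \eqref{eq:kappa-r}. The remainder times $z$ is $O(1/\log m)$, which yields exactly the main term of \eqref{eq:Kexp}.

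For $2\le M\le K=\lfloor C'\log\log m\rfloor$, the limit $F_M(z)=1+O(z^M)$ has $F_M'(0)=0$. So by \eqref{eq:block-quant} each $|\partial_z\log F_{p,M}(0)|\ll M^{B}p^{-1/2}\log p$. Summed over the $\ll m/M$ relevant primes $p\asymp m/M$ and multiplied by $z$, this is $O((\log m)^{R+O(1)}/\sqrt m)=o(1)$.

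For $M>K$, \eqref{eq:block-decay} gives $\sum_p|\partial_z\log F_{p,M_p}(0)|\ll m\sum_{M>K}M^{B}\rho^M/M\ll m(\log m)^{-R-1}$ for $C'$ large. Multiplying by $z$ gives $o(1)$. Collecting Steps 1--3 proves \eqref{eq:Kexp} uniformly in the stated range.
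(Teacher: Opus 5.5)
Your argument is correct, but it is organized differently from the paper's. The paper Taylor-expands $K_m$ as a single function. It assembles $K_m'(0)$ from three derivative contributions at the origin: the $S_j$ term, Theorem~\ref{thm:prime-block} with $j=1$ together with $A_r'(0)=a_{r,1}/2=\kappa_r$, and Lemma~\ref{lem:control-derivative}. It then controls every second-order term at once through $K_m''(z)=\operatorname{Var}_z(H)=O(m)$. That variance bound comes from the per-block conditioned-variance identity plus a bounded Radon--Nikodym comparison between $\Pp_z^{>}$ and the completed law, which is bounded because $zh_m=o(1)$. You instead split $K_m(z)$ itself into the $S_j$ part, the large-prime blocks and $\mathcal R_m$, and expand each piece separately. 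For the blocks you read $\kappa_r$ off directly from the primes in $(m/2,m]$. You discard the $M\ge2$ blocks at first order via $F_M'(0)=0$, \eqref{eq:block-quant} and \eqref{eq:block-decay}, which amounts to reproving the $j=1$, $z=0$ case of Theorem~\ref{thm:prime-block} without the series $A_r$. Your route makes the origin of $\kappa_r$ more transparent and needs no completed-law variance bound for the block part. The paper's route is shorter because one variance estimate handles all second-order terms.

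Your Step~2 bookkeeping is off, though harmlessly. Write $H_0$ for the noncontrol residue sum and $C(X)\in[0,h_m)$ for the control contribution. Then
\[
 \mathcal R_m''(z)=O(h_m)+\operatorname{Var}_z^{\rm all}(H_0)-\operatorname{Var}_z^{>}(H_0)
 +2\operatorname{Cov}_z^{\rm all}(H_0,C)+\operatorname{Var}_z^{\rm all}(C).
\]
Besides the covariance term you isolated, there is a variance difference you did not account for. Both variances are $O(m)$ (for the completed law via the Radon--Nikodym comparison) and $z^2m=o(1)$, so the $O(m)$ bound was already enough.

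More simply, no Taylor expansion is needed here. For all $Q$ and $r$, $g_{Q,z}(r)=Q\mu_{Q,z}(r)\in[e^{-2z},e^{z}]$. Hence $\widetilde W_z\in[e^{-2zh_m},e^{zh_m}]$ pointwise, and $|\mathcal R_m(z)|\le 2zh_m=o(1)$ directly. This is the same observation the paper uses to bound its Radon--Nikodym derivative.
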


\begin{proof}
Let $C_m(z)=\sum_hc_m(h)e^{-zh}$.  Then
$C_m(z)=P_m(z)\prod_{j\le m}S_j(z)$ and
\[
 K_m(z)=\frac{zm}{2}+\log C_m(z)-\log C_m(0).
\]
At $z=0$, with $H_m=\sum_{j\le m}j^{-1}$,
\[
 \sum_{j\le m}(\log S_j)'(0)=-\frac m2+\frac{H_m}{2}.
\]
For the large-prime part, Theorem~\ref{thm:prime-block}, with derivative order
one and the same fixed $R$, gives
\[
 \sum_{\sqrt m<p\le m}(\log F_{p,M_p})'(0)
 =m\sum_{r=1}^{R}\frac{A_r'(0)}{(\log m)^r}
 +o\!\left(\frac{m}{(\log m)^R}\right).
\]
Lemma~\ref{lem:control-derivative} shows that the entire small-prime control
correction contributes only $O(h_m)$ to this derivative.  Now
$(\log F_1)'(0)=1/2$, whereas $F_M'(0)=0$ for $M\ge2$.  Hence only the
$M=1$ term in \eqref{eq:Ar} contributes to $A_r'(0)$.  Since
\[
 a_{r,1}=(r-1)!\left(1-\frac12
 \sum_{j=0}^{r-1}\frac{(\log 2)^j}{j!}\right),
\]
we have $A_r'(0)=a_{r,1}/2=\kappa_r$.  As
$H_m=O(\log m)$ and $h_m=o(m/(\log m)^R)$ for every fixed $R$, hence
\begin{equation}\label{eq:Kprime-all-orders}
 K_m'(0)=m\sum_{r=1}^{R}\frac{\kappa_r}{(\log m)^r}
 +o\!\left(\frac{m}{(\log m)^R}\right).
\end{equation}

It remains to bound $K_m''(z)=\operatorname{Var}_z(H)$.  First omit the
small-prime controls and condition only on the disjoint large-prime blocks.
The same second-derivative identity used in Lemma~\ref{lem:mean-var} shows
that each conditioned block has variance $O(M_p)$; residue coordinates not
belonging to such a block remain independent and each has variance at most
$1/4$.  Hence the noncontrol residue sum $H_0$ satisfies
\[
 \operatorname{Var}_z^{>}(H_0)=O(m)
\]
uniformly for $0\le z\le C(\log m)^R/m$, since this range is eventually
contained in any fixed bounded $z$-interval.

Now restore the controls.  Their total contribution $C(X)$ lies in
$[0,h_m)$, so $\operatorname{Var}(C(X))\le h_m^2=O(m)$.  Also, uniformly in
$Q$ and $r$,
\[
 Q\mu_{Q,z}(r)=\exp\{O(z)\}.
\]
For $z\le C(\log m)^R/m$ one has $h_mz=o(1)$, so the product of the $h_m$
control weights is bounded above and below by positive constants for all
sufficiently large $m$.  After normalization, the Radon--Nikodym derivative
of the completed law with respect to the noncontrol law is therefore bounded
above and below by constants depending only on $R$ and $C$.  If
$a=\E_z^{>}H_0$, then
\[
 \operatorname{Var}_z^{\rm all}(H_0)
 \le \E_z^{\rm all}(H_0-a)^2
 \ll_{R,C}\E_z^{>}(H_0-a)^2\ll_{R,C} m.
\]
Finally $H=H_0+C(X)$, so Cauchy--Schwarz gives
$\operatorname{Var}_z^{\rm all}(H)=O_{R,C}(m)$.  Thus
$K_m''(z)=O_{R,C}(m)$ uniformly throughout the stated range.  Taylor's
theorem and \eqref{eq:Kprime-all-orders} give
\[
 K_m(z)=zK_m'(0)+O_{R,C}(mz^2).
\]
Here $z\,o(m/(\log m)^R)=o(1)$ and
$mz^2=O_{R,C}((\log m)^{2R}/m)=o(1)$, proving \eqref{eq:Kexp}.
\end{proof}

\begin{proof}[Proof of Theorem~\ref{thm:transition}]
Put $L=\log m$ and $z_0=(m-1)/n$.  Uniformly for $0\le H<m$,
\[
 \log\prod_{r=1}^{m-1}\left(1+\frac{r-H}{n}\right)
 =z_0\left(\frac m2-H\right)
 +O\!\left(\frac{m^3}{n^2}\right).
\]
If $n\ge m^2/(CL^R)$, then $z_0\le CL^R/m$ and
$m^3/n^2=O_{R,C}(L^{2R}/m)=o(1)$.  Thus there is a uniform
$\varepsilon_m=o(1)$ such that
\[
 e^{-\varepsilon_m}e^{z_0(m/2-H)}
 \le \prod_{r=1}^{m-1}\left(1+\frac{r-H}{n}\right)
 \le e^{\varepsilon_m}e^{z_0(m/2-H)}.
\]
Taking expectations and logarithms, \eqref{eq:ratio-expectation} and
Lemma~\ref{lem:cumulant} give, uniformly in this range,
\[
 \log\frac{s(n,m)}{(m/L_m)n^{m-1}}
 =z_0m\sum_{r=1}^{R}\frac{\kappa_r}{L^r}+o(1).
\]
Replacing $z_0=(m-1)/n$ by $m/n$ changes the right side by
$O_{R,C}(L^{R-1}/m)=o(1)$, proving \eqref{eq:sparse-all-orders}.

Taking $R=1$ gives \eqref{eq:transition} whenever
$nL/m^2\to c\in(0,\infty]$.  Taking $R=2$ and
$n=\lfloor c m^2/L^2\rfloor$ gives
\[
 \frac{m^2}{n}\left(\frac{\kappa_1}{L}+\frac{\kappa_2}{L^2}\right)
 =\frac{L}{4c}+\frac{1-\log 2}{4c}+o(1),
\]
which yields \eqref{eq:second-sparse-scale}.  Finally, for general fixed $R$
and $n=\lfloor c m^2/L^R\rfloor$, equation
\eqref{eq:sparse-all-orders} becomes
\[
 \log\frac{s(n,m)}{(m/L_m)n^{m-1}}
 =\frac1c\sum_{r=1}^{R}\kappa_rL^{R-r}+o(1).
\]
Exponentiating proves \eqref{eq:R-sparse-scale}.
\end{proof}

\section{Numerical evaluation of the first coefficients}\label{sec:numerics}

The decimal values in \eqref{eq:diag-values} are included only to give a
sense of size.  They can be checked without summing the slowly convergent
series in \eqref{eq:FM} directly.  For $M\ge2$ and $D=D_M$, the classical
partial-fraction expansion of the hyperbolic cotangent gives
\begin{equation}\label{eq:coth-check}
 \sum_{\ell\in\mathbb Z}\frac1{(z+2\pi iD\ell)^M}
 =\frac{(-1)^{M-1}}{(M-1)!}
 \frac{d^{M-1}}{dz^{M-1}}
 \left(\frac1{2D}\coth\frac{z}{2D}\right).
\end{equation}
Thus $F_M(z)$ and the derivatives needed in
\eqref{eq:E1}--\eqref{eq:E3} reduce to derivatives of elementary hyperbolic
functions.  Evaluating the terms $M\le6$ at $z=1$ gives
\[
\begin{aligned}
 E_1(1)&=0.225872291710\ldots,\\
 E_2(1)&=0.046894482099\ldots,\\
 E_3(1)&=0.017888501642\ldots.
\end{aligned}
\]
The remaining $M$-tail is tiny.  Indeed $D_M\ge D_7=420$ for $M\ge7$, and
from \eqref{eq:FM}
\[
 |F_M(1)-1|
 \le 2\sum_{\ell\ge1}(2\pi D_M\ell)^{-M}
 \le \frac{2\zeta(M)}{(840\pi)^M}.
\]
Direct differentiation gives the same bound with only a fixed polynomial
factor in $M$ for the first two derivatives.  This is sufficient for
the nine decimal places in \eqref{eq:diag-values}.  No numerical
estimate enters any theorem.


\begin{thebibliography}{99}

\bibitem{BeckRobins15}
M.~Beck and S.~Robins,
\emph{Computing the Continuous Discretely}, 2nd ed., Springer, New York, 2015.

\bibitem{DilcherVignat17}
K.~Dilcher and C.~Vignat,
\emph{An explicit form of the polynomial part of a restricted partition function},
Res. Number Theory \textbf{3} (2017), Art.~1, 12 pp.
\href{https://doi.org/10.1007/s40993-016-0065-3}{doi:10.1007/s40993-016-0065-3}.

\bibitem{JiangWang19}
T.~Jiang and K.~Wang,
\emph{A generalized Hardy--Ramanujan formula for the number of restricted integer partitions},
J. Number Theory \textbf{201} (2019), 322--353.
\href{https://doi.org/10.1016/j.jnt.2019.02.006}{doi:10.1016/j.jnt.2019.02.006}.

\bibitem{KimKim25}
B.~Kim and E.~Kim,
\emph{Distributions of reciprocal sums of parts in integer partitions},
J. Combin. Theory Ser. A \textbf{211} (2025), Art.~105982.
\href{https://doi.org/10.1016/j.jcta.2024.105982}{doi:10.1016/j.jcta.2024.105982}.

\bibitem{Kiran22}
N.~Uday Kiran,
\emph{An algebraic approach to $q$-partial fractions and Sylvester denumerants},
Ramanujan J. \textbf{59} (2022), 671--712.
\href{https://doi.org/10.1007/s11139-022-00595-z}{doi:10.1007/s11139-022-00595-z}.

\bibitem{MontgomeryVaughan07}
H.~L.~Montgomery and R.~C.~Vaughan,
\emph{Multiplicative Number Theory I. Classical Theory},
Cambridge Univ. Press, Cambridge, 2007.

\bibitem{OEIS208480}
OEIS Foundation Inc.,
\emph{A208480: Number of representations of $n$ as $m_1/1+\cdots+m_n/n$},
\url{https://oeis.org/A208480}.

\bibitem{OSullivan16}
C.~O'Sullivan,
\emph{Asymptotics for the partial fractions of the restricted partition generating function, I},
Int. J. Number Theory \textbf{12} (2016), no.~6, 1421--1474.
\href{https://doi.org/10.1142/S1793042116500895}{doi:10.1142/S1793042116500895}.

\bibitem{OSullivan18}
C.~O'Sullivan,
\emph{Partitions and Sylvester waves},
Ramanujan J. \textbf{47} (2018), no.~2, 339--381.
\href{https://doi.org/10.1007/s11139-017-9939-9}{doi:10.1007/s11139-017-9939-9}.

\bibitem{OSullivan20}
C.~O'Sullivan,
\emph{Rademacher's conjecture and expansions at roots of unity of products generating restricted partitions},
J. Number Theory \textbf{216} (2020), 335--379.
\href{https://doi.org/10.1016/j.jnt.2020.04.005}{doi:10.1016/j.jnt.2020.04.005}.

\bibitem{SinhaMO12}
N.~K.~Sinha,
\emph{A simple looking problem in partitions that became increasingly complex},
MathOverflow, Question 96204 (2012),
\url{https://mathoverflow.net/questions/96204}.

\bibitem{Sylvester1857}
J.~J.~Sylvester,
\emph{On the partition of numbers},
Quart. J. Pure Appl. Math. \textbf{1} (1857), 141--152.

\bibitem{XinZhang25}
G.~Xin and C.~Zhang,
\emph{An algebraic combinatorial approach to Sylvester's denumerant},
Ramanujan J. \textbf{66} (2025), Art.~64, 28 pp.
\href{https://doi.org/10.1007/s11139-024-01022-1}{doi:10.1007/s11139-024-01022-1}.

\end{thebibliography}
\end{document}